\documentclass[11pt]{amsart}
\usepackage{amsmath, color, amssymb,amsthm}
\usepackage{enumitem}
\usepackage[hidelinks]{hyperref}
\newtheorem{theorem}{Theorem}[section]
\newtheorem{proposition}[theorem]{Proposition}
\newtheorem{lemma}[theorem]{Lemma}

\theoremstyle{definition}
\newtheorem{definition}[theorem]{Definition}
\theoremstyle{remark}
\newtheorem{remark}[theorem]{Remark}
\newcommand{\E}{\operatorname{End}_K(V)}
\newcommand{\F}{\mathcal{F}}
\newcommand{\im}{\operatorname{im}}
\newcommand{\Kx}{K[[X]]}

\newcommand{\Vd}{V^{*}}
\newcommand{\Mu}[1]{M_{#1}}
\newcommand{\Dsh}{D}
\newcommand{\tht}[2]{\theta_{#1,#2}}
\begin{document}
\title[A prime isoartinian ring that is not right or left noetherian]%
{A prime isoartinian ring\\ that is not  noetherian}
\author{Zahra Nazemian and Doniyor Yazdonov}
\address{Department of Mathematics and Scientific Computing, University of Graz, \newline 8010 Graz, Austria}
\email{zahra.nazemian@uni-graz.at}
\email{doniyor.yazdonov@uni-graz.at}

\subjclass[2020]{Primary 16P99; Secondary 16P60, 16D25, 16N60}
\keywords{Isoartinian ring, chain conditions up to isomorphism, semiprime ring,
prime ring, Goldie dimension, rings of linear transformations}
\begin{abstract}
A right module is isoartinian (resp., isonoetherian) if every descending  (resp., ascending) chain of submodules is eventually
constant up to isomorphism, and a ring $R$ is right isoartinian (resp., isonoetherian) if $R_R$ is isoartinian (resp., isonoetherian). 
Facchini and Nazemian proved that a semiprime right isoartinian ring is right noetherian if
and only if it is right isonoetherian if  and only if it has finite Goldie dimension and that every commutative semiprime
isoartinian ring is noetherian. We construct, over an arbitrary field $K$, a (non commutative) prime (right and left)  isoartinian ring that is neither right nor left noetherian. 
\end{abstract}
\maketitle
\section{Introduction}
Throughout, rings are associative with identity and modules are unital right modules.
Following Facchini and Nazemian~\cite{FN}, a module $M_R$ is called \emph{isoartinian} if for
every descending chain
\[
M\supseteq M_1\supseteq M_2\supseteq\cdots
\]
of submodules of $M$ there is an index $n\ge 1$ such that $M_i\cong M_n$ for every $i\ge n$.
A ring $R$ is \emph{right isoartinian} if the module $R_R$ is isoartinian. Every right artinian
ring is right isoartinian, and a domain is right isoartinian if and only if it is a principal
right ideal domain~\cite{FN}.
Semiprime right isoartinian rings are studied in \cite[Section~4]{FN}. Such rings are right
non-singular and satisfy the ascending chain condition on annihilator ideals
\cite[Lemmas~4.3 and~4.5]{FN}. Moreover, by \cite[Theorem~4.6]{FN}, for a semiprime right
isoartinian ring $R$ the following are equivalent: $R$ is right noetherian; $R$ is right
Goldie; $R_R$ has a finite Goldie dimension; $R_R$ is a sum of the isosimple right ideals. The
semiprime right isoartinian right noetherian rings are exactly the finite direct products of
matrix rings over principal right ideal domains \cite[Theorem~4.7]{FN}. In the commutative case
the noetherian hypothesis is automatic: a commutative semiprime isoartinian ring is a finite
direct product of principal ideal domains \cite[Corollary~4.8]{FN}.
It is therefore natural to ask whether every semiprime right isoartinian ring is right
noetherian, that is, whether the finiteness of the Goldie dimension in \cite[Theorem~4.6]{FN}
is automatic, as it is for commutative rings. This question has been open for years and since the  definition of isonoetherian (isoartinian) rings in \cite{FN}, isoartinian rings and modules have been studied in various sources;  for example 
\cite{1, 2, 3, 4, 5, 6, 7, 8, 9}.  The purpose of this note is to show that the
answer is negative.
\begin{theorem}\label{thm:main}
Let $K$ be a field. There is a ring $R$, constructed in Section~\ref{sec:ring}, with the
following properties:
\begin{enumerate}[label=\textup{(\arabic*)}]
\item $R$ is a prime ring; in particular $R$ is semiprime;
\item $R$ is right and left isoartinian;
\item $R$ is neither right noetherian nor left noetherian.
\end{enumerate}
\end{theorem}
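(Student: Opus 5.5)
The plan is to build $R$ as a ``scalar plus finite rank'' ring of linear transformations, for which primeness and non-noetherianity are nearly immediate, and then to concentrate the work on the isoartinian property. Concretely, take $D=\Kx$, a discrete valuation ring and hence a principal right and left ideal domain, so $D$ is isoartinian by \cite{FN}. Let $V$ be a free $D$-module of countable rank with basis $e_1,e_2,\dots$, and let $\F\subseteq\operatorname{End}_D(V)$ be the two-sided ideal of endomorphisms whose image lies in a finitely generated submodule and which vanish on all but finitely many basis vectors. Equivalently, $\F$ is the ring of $\mathbb N\times\mathbb N$ matrices over $D$ with finitely many nonzero entries, written $\bigcup_n \Mu{n}(D)$. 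Set $R=D\cdot 1+\F$. Since $\F$ has no identity, $R/\F\cong D$. I expect the actual construction of Section~\ref{sec:ring} to be close to this, possibly with $\Vd$ and maps $\tht{i}{j}$ used to handle the left-hand side.

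Properties (1) and (3) should be checked first. For primeness, suppose $aRb=0$ with $a,b\neq 0$. Choose matrix units $e_{ij}\in\F$ with $e_{pi}\,a\neq 0$ for suitable $p,i$ and $b\,e_{jq}\neq 0$ for suitable $j,q$. Then $e_{pi}\,a\,e_{kk'}\,b\,e_{jq}$ can be made nonzero, because $D$ is a domain and $\F$ acts faithfully; this is a contradiction. For (3), the right ideals $\sum_{i\le n}e_{ii}R$ form a strictly ascending chain, so $R$ is not right noetherian. The chain of the $Re_{ii}$ gives the symmetric argument on the left.

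The main obstacle is (2). Let $I_1\supseteq I_2\supseteq\cdots$ be right ideals. Their images in $R/\F\cong D$ form a descending chain of ideals of $D$. Either these images are eventually $0$, so that $I_n\subseteq\F$ from some point on, or they are eventually of the form $X^{k_n}D$ with $k_n$ eventually constant.

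I would treat the two cases as follows.
\begin{itemize}
\item[(a)] If $I\subseteq\F$, then $I=\bigoplus_i e_{ii}I$ is determined by a family of $D$-submodules of the row module $D^{(\mathbb N)}$. Each such submodule is free because $D$ is a PID. The isomorphism type of $I_R$ is then governed by a single cardinal, namely the total rank, which is either finite or $\aleph_0$. Along a descending chain this rank is non-increasing, hence eventually constant, and that should give isomorphism.
\item[(b)] If $I\not\subseteq\F$, then $I$ contains an element $X^k+f$ with $f\in\F$. Multiplying by an idempotent $1-\epsilon$ with $\epsilon\in\F$ killing $f$ shows that $I$ contains $X^k(1-\epsilon)R$, so $I$ is ``cofinite''. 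I would then show that $I\cong X^kR\oplus J$ or $I\cong R$, where $J\subseteq\F$ has finite rank. The approach is to use the shift isomorphism $(1-\epsilon)R\cong R$, which exists because $V$ has countable rank, so $R$ absorbs finite-rank summands. The isomorphism type then depends only on $k$, together with a finite-rank defect that can be absorbed. Since $k$ and the defect eventually stabilize, the chain stabilizes up to isomorphism.
\end{itemize}

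I expect the absorption isomorphisms in (b) to be the delicate point, in particular checking that they are $R$-linear and not merely $\F$-linear. The left-hand statement should follow by the transpose symmetry of the construction.
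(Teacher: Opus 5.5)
\textbf{There is a genuine gap, and it breaks the construction itself, not just a detail.} Your ring $R=D\cdot 1+\F$ (with $D=\Kx$ and $\F$ the finitary matrices over $D$) is prime and not noetherian, as you say. But it is \emph{not} right isoartinian.

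The reason is that this $R$ is Dedekind-finite. Suppose $yx=1$ with $x=d_1+F_1$ and $y=d_2+F_2$. Then $d_2d_1=1$ in $D$. For $n$ large, both $x$ and $y$ are block diagonal: $x=\operatorname{diag}(A,d_1)$ and $y=\operatorname{diag}(B,d_2)$ with $A,B\in M_n(D)$ and $BA=1$. Since $D$ is commutative, $AB=1$, and so $xy=1$. Consequently $R_R\not\cong R_R\oplus Q$ for every nonzero $Q$, because such an isomorphism yields $x,y\in R$ with $yx=1\neq xy$.

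Now let $\epsilon_n=e_{11}+\dots+e_{nn}$, $P=e_{11}R$ and $I_n=(1-\epsilon_n)R$. Then:
\begin{itemize}
\item $I_1\supseteq I_2\supseteq\cdots$;
\item no $I_n$ is contained in $\F$;
\item $R=\epsilon_nR\oplus I_n\cong P^n\oplus I_n$.
\end{itemize}
If $I_n\cong I_m$ with $n<m$, then $R\cong P^m\oplus I_m\cong P^{m-n}\oplus R$, which is impossible. So this chain is not eventually constant up to isomorphism.

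In the terms of your step (b), $k=0$ throughout, but the finite-rank ``defect'' is $-n$ and decreases forever, so it never stabilizes. The isomorphism $(1-\epsilon)R\cong R$ that you want from a shift does not exist, because the shift is not an element of $R$. Step (a) also has a smaller slip: $e_{ii}I\not\subseteq I$ for a right ideal $I$, so $I=\bigoplus_i e_{ii}I$ is false. The correct invariant is the single column module $Ie_{11}$, i.e.\ the image. Your conclusion that a rank classifies such $I$ does survive.

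\textbf{The missing idea} is that the ring must contain the shift itself. Then it is far from Dedekind-finite and finite-rank summands get absorbed. The paper's construction does exactly this:
\begin{itemize}
\item $V=\Kx$ is regarded only as a $K$-vector space, and $\F$ is the ideal of all finite-rank $K$-linear operators;
\item $R$ consists of the operators congruent modulo $\F$ to some $\Dsh^N\Mu{u}$, so $\Dsh\Mu{X}=1\neq\Mu{X}\Dsh$;
\item every element of $R\setminus\F$ is invertible modulo $\F$ (Lemma~\ref{lem:inverse});
\item the good-subspace argument (Proposition~\ref{prop:good} and Lemma~\ref{lem:iso-R}) gives $A\cong R_R$ for \emph{every} right ideal $A\not\subseteq\F$;
\item the right ideals inside $\F$ are the $\F_W$, classified by the cardinal $\dim_K W$.
\end{itemize}
Well-ordering of cardinals then finishes the descending-chain argument, and a dual analysis through the subspaces $U(L)$ of $\Vd$ handles left ideals.
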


\section{The ring \texorpdfstring{$R$}{R}}\label{sec:ring}

Let \(K\) be a field, and let
$
V=K[[X]]
$
be the \(K\)-vector space of formal power series

$$
f(X)=\sum_{n=0}^{\infty}a_nX^n,
\qquad a_n\in K.
$$
The {\sf{backward shift}} is the \(K\)-linear operator
$$
D:K[[X]]\longrightarrow K[[X]]
$$
defined by
$$
D\left(\sum_{n=0}^{\infty}a_nX^n\right)
=
\sum_{n=0}^{\infty}a_{n+1}X^n.
$$
In particular,
$$
D(1)=0,
\qquad
D(X^n)=X^{n-1}\quad(n\geq1).
$$
For a formal power series $u(X)\in K[[X]],$
the {\sf {multiplication operator}} \(M_u\) is the \(K\)-linear operator
$$
M_u:K[[X]]\longrightarrow K[[X]]
$$
defined by
$$
M_u(f)=u f.
$$
Thus,

$$
M_u\left(\sum_{n=0}^{\infty}a_nX^n\right)
=
u(X)\sum_{n=0}^{\infty}a_nX^n.
$$
For \(N\geq0\), the operator $D ^N $ means the $N$-fold composition of $D$ and  
$D^N M_u$ means the composition
$$
D^N M_u(f)=D^N\bigl(M_u(f)\bigr)
=D^N(uf).
$$
A \(K\)-linear operator
$F:K[[X]]\longrightarrow K[[X]]$
is said to have {\sf {finite rank}} if its image is finite-dimensional over \(K\):
$$
\dim_K(\operatorname{im}F)<\infty.
$$
For example, the operator $P_0(f)=f(0)$ has rank one, since $\operatorname{im}P_0=K\cdot1.$

Two \(K\)-linear operators \(T,S:K[[X]]\to K[[X]]\) are said to {\sf {agree up to a finite-rank operator}} if
$
T-S
$ has finite rank. Equivalently, there exists a finite-rank operator \(F\) such that
$
T=S+F.$

In particular, an operator \(T\) agrees up to finite rank with \(D^NM_u\) if  $T-D^NM_u$ has finite rank, or equivalently,
$T=D^NM_u+F$, for some finite-rank operator \(F\).

Let 
$
\operatorname{End}_K(K[[X]])
$ denote the ring of all \(K\)-linear operators
$$
K[[X]]\longrightarrow K[[X]],
$$
where addition is defined pointwise and multiplication is composition:
$$
(S+T)(f)=S(f)+T(f),
\qquad
(ST)(f)=S(T(f)).
$$

The ideal  of $
\operatorname{End}_K(K[[X]])
$, which contains all operators of finite rank, is denoted by $\mathcal{F}$. 

\begin{definition}
Let
\[
R=\{\,r\in\E:\ r-\Dsh^N\Mu{u}\in\F \text{ for some } N\ge0,\ u\in\Kx\,\}.
\]
\end{definition}

$R$ contains $ \mathcal{F}$  and also 
 the backward shift $D$ and the operator $M_X$ of multiplication by $X$, which satisfy
$DM_X=1\neq M_XD$. We prove in Proposition \ref{prop:subring}, that $R$  is a ring and so a noncommutative one.  

Let $\Vd=\operatorname{Hom}_K(V,K)$ be the full algebraic dual of $V$. For $w\in V$ and
$\psi\in\Vd$ let $\tht{w}{\psi}\in\F$ be the operator
\[
\tht{w}{\psi}(x)=\psi(x)\,w\qquad(x\in V).
\]
Then $\im\tht{w}{\psi}\subseteq Kw$, and for every $r\in\E$ we have
\begin{equation}\label{eq:theta} \tag{E2.1}
r\,\tht{w}{\psi}=\tht{r(w)}{\psi},\qquad \tht{w}{\psi}\,r=\tht{w}{\psi\circ r}.
\end{equation}
The map    $V \times \Vd  \to \mathcal{F} $, defined by $(w,\psi)\mapsto\tht{w}{\psi}$ is $K$-bilinear. Every $f\in\F$ is a finite sum of
such operators with $w\in\im f$: if $w_1,\dots,w_n$ is a basis of $\im f$ and
$\chi_1,\dots,\chi_n\in\Vd$ are defined by $f(x)=\sum_i\chi_i(x)w_i$ (the coordinates of
$f(x)$ in this basis), then
\begin{equation}\tag{E2.2}\label{eq:decomp}
f=\sum_{i=1}^n\tht{w_i}{\chi_i}.
\end{equation}
For $p=\sum_{n\ge0}p_nX^n\in V$ let $\delta(p)=p_0$; thus $\delta\in\Vd$. Similarly, for every $ n$, we can define 
 $\delta_n \in \Vd$ such that  $\delta_n (p) = p_n$. In particular, for every nonzero $p \in V$, there exists $ \psi \in \Vd$ such that $ \psi ( p) \neq 0 $. 
Note that, for every $ u \in 
\Kx$,   $\Mu{X}$ is injective
because $\Kx$ is a domain. We have: 
\begin{lemma}\label{lem:shift}
Let $u\in\Kx$ and $N,k\ge0$. Then:
\begin{enumerate}[label=\textup{(\alph*)}]
\item $\Dsh\Mu{X}=1$ and $\Mu{X}\Dsh=1-\tht{1}{\delta}$; in particular $\Mu{X}\Dsh(q)=q$
whenever $ q \in \Kx$  and $\delta(q)=0$;
\item $\Dsh^k\Mu{X^k}=1$, and hence $\Dsh^N\Mu{u}=\Dsh^{N+k}\Mu{X^ku}$;
\item $\Dsh\Mu{u}-\Mu{u}\Dsh=\tht{\Dsh(u)}{\delta}$; in particular it has rank at most one;
\item $\Dsh^N\Mu{u}-\Mu{u}\Dsh^N\in\F$.
\end{enumerate}
\end{lemma}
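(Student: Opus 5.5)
The plan is to verify each identity by evaluating both sides on an arbitrary $f=\sum_{n\ge0}a_nX^n\in V$, working coefficientwise, and then to derive (b) and (d) from (a) and (c) by induction.

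For (a), I would compute $\Mu{X}(f)=\sum_n a_nX^{n+1}$, so $\Dsh\Mu{X}(f)=f$. Also $\Mu{X}\Dsh(f)=\sum_{n\ge1}a_nX^n=f-a_0=f-\delta(f)\cdot 1$, and this is exactly $(1-\tht{1}{\delta})(f)$. The statement about $q$ with $\delta(q)=0$ is then immediate.

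For (b), induction on $k$ gives $\Dsh^{k}\Mu{X^{k}}=\Dsh^{k-1}(\Dsh\Mu{X})\Mu{X^{k-1}}=\Dsh^{k-1}\Mu{X^{k-1}}=1$. Here I use $\Mu{X^k}=\Mu{X}\Mu{X^{k-1}}$, which holds because $u\mapsto\Mu{u}$ is multiplicative. Then
\[
\Dsh^{N+k}\Mu{X^ku}=\Dsh^N(\Dsh^k\Mu{X^k})\Mu{u}=\Dsh^N\Mu{u}.
\]

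For (c), I write $u=u_0+X\Dsh(u)$, which is (a) applied to $u$, and similarly $f=a_0+X\Dsh(f)$. Then
\[
uf=u_0a_0+X\bigl(u_0\Dsh(f)+a_0\Dsh(u)+X\Dsh(u)\Dsh(f)\bigr).
\]
Applying $\Dsh$ kills the constant and strips one $X$, so
\[
\Dsh(uf)=u_0\Dsh(f)+a_0\Dsh(u)+X\Dsh(u)\Dsh(f)=u\,\Dsh(f)+a_0\Dsh(u).
\]
This gives $\Dsh\Mu{u}-\Mu{u}\Dsh=\tht{\Dsh(u)}{\delta}$, whose image lies in $K\Dsh(u)$, so its rank is at most one.

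For (d), I use induction on $N$ with the telescoping identity
\[
\Dsh^{N}\Mu{u}-\Mu{u}\Dsh^{N}=\Dsh^{N-1}(\Dsh\Mu{u}-\Mu{u}\Dsh)+(\Dsh^{N-1}\Mu{u}-\Mu{u}\Dsh^{N-1})\Dsh.
\]
The first term lies in $\F$ by (c) together with the fact that $\F$ is an ideal of $\E$ (alternatively by \eqref{eq:theta}). The second term lies in $\F$ by the induction hypothesis and the ideal property. The base case $N=0$ is trivial.

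The main obstacle is the coefficient bookkeeping in (c), specifically the sign and orientation of the rank-one correction. Decomposing $u$ and $f$ via (a) as above avoids manipulating convolution sums directly.
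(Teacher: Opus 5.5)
Your proof is correct and essentially the same as the paper's: (a) by direct computation, (b) by induction via $\Dsh^{k}\Mu{X^{k}}=\Dsh^{k-1}(\Dsh\Mu{X})\Mu{X^{k-1}}$, and (d) by the same telescoping identity together with the ideal property of $\F$. The only difference is cosmetic and occurs in (c): you expand $uf$ using $u=u_0+X\Dsh(u)$ and $f=a_0+X\Dsh(f)$ and then apply $\Dsh\Mu{X}=1$ directly, whereas the paper multiplies $\Dsh(up)-u\Dsh(p)$ by $X$, uses $\Mu{X}\Dsh=1-\tht{1}{\delta}$, and cancels $X$ because $\Kx$ is a domain.
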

\begin{proof}
(a) It is clear by the definitions. \\
(b) The first identity follows from (a) by induction on $k$, using the equality
$\Dsh^{k+1}\Mu{X^{k+1}}=\Dsh^k(\Dsh\Mu{X})\Mu{X^k}$. Then
$\Dsh^{N+k}\Mu{X^ku}=\Dsh^N(\Dsh^k\Mu{X^k})\Mu{u}=\Dsh^N\Mu{u}$. \\
(c) Let $p\in V$. By (a),
\begin{align*}
X\bigl(\Dsh(up)-u\Dsh(p)\bigr)&=\bigl(up-\delta(u)\delta(p)\bigr)-u\bigl(p-\delta(p)\bigr)\\
&=\delta(p)\bigl(u-\delta(u)\bigr)=X\,\delta(p)\Dsh(u).
\end{align*}
Since $X$ is not a zero divisor in $\Kx$, we get $\Dsh(up)-u\Dsh(p)=\delta(p)\Dsh(u) = \tht{\Dsh(u)}{\delta} (p)$. This proves (c). \\
(d) By induction on $N$, using (c), the identity
\[
\Dsh^{N+1}\Mu{u}-\Mu{u}\Dsh^{N+1}=\Dsh^N(\Dsh\Mu{u}-\Mu{u}\Dsh)+(\Dsh^N\Mu{u}-\Mu{u}\Dsh^N)\Dsh,
\]
and the fact that $\F$ is a two-sided ideal of $\E$.
\end{proof}

\begin{proposition}\label{prop:subring}
$R$ is a subring of $\E$ generated by  $\F$, $\Dsh$, and $\Mu{u}$ for every $u\in\Kx$. More
precisely, if  $r-\Dsh^N\Mu{u}\in\F$ and $s-\Dsh^{N'}\Mu{u'}\in\F$, then
\[
rs-\Dsh^{N+N'}\Mu{uu'}\in\F\quad\text{and}\quad
r+s-\Dsh^{N+N'}\Mu{X^{N'}u+X^Nu'}\in\F .
\]
\end{proposition}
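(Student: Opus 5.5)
We will verify the two displayed congruences modulo $\F$ directly; closure of $R$ under products and sums then follows at once. Since $1=\Dsh^0\Mu{1}$ and $-r-\Dsh^N\Mu{-u}\in\F$, $R$ contains the identity and additive inverses, so $R$ is a subring. For the generation claim, observe that $\F\subseteq R$ (take $N=0$, $u=0$), $\Dsh=\Dsh^1\Mu{1}\in R$, and $\Mu{u}\in R$. Conversely, every $r\in R$ has the form $\Dsh^N\Mu{u}+f$ with $f\in\F$, which is visibly an element of the subring generated by $\F$, $\Dsh$ and the $\Mu{u}$. Throughout we only use that $\F$ is a two-sided ideal of $\E$.

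\emph{Product.} Write $r=\Dsh^N\Mu{u}+f$ and $s=\Dsh^{N'}\Mu{u'}+g$ with $f,g\in\F$. Expanding $rs$, the three terms involving $f$ or $g$ lie in $\F$ because $\F$ is an ideal. Hence $rs\equiv\Dsh^N\Mu{u}\Dsh^{N'}\Mu{u'}\pmod{\F}$. By Lemma~\ref{lem:shift}(d), $\Mu{u}\Dsh^{N'}\equiv\Dsh^{N'}\Mu{u}\pmod{\F}$. Multiplying on the left by $\Dsh^N$ and on the right by $\Mu{u'}$ preserves this congruence, which gives
\[
rs\equiv\Dsh^{N+N'}\Mu{u}\Mu{u'}=\Dsh^{N+N'}\Mu{uu'}\pmod{\F}.
\]
The last equality holds because $\Mu{u}\Mu{u'}=\Mu{uu'}$, by commutativity of $\Kx$.

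\emph{Sum.} By Lemma~\ref{lem:shift}(b) with $k=N'$, we have $\Dsh^N\Mu{u}=\Dsh^{N+N'}\Mu{X^{N'}u}$. Similarly, with $k=N$ we have $\Dsh^{N'}\Mu{u'}=\Dsh^{N+N'}\Mu{X^{N}u'}$. Adding these two exact identities and using $\Mu{a}+\Mu{b}=\Mu{a+b}$ gives
\[
r+s\equiv\Dsh^{N+N'}\Mu{X^{N'}u+X^Nu'}\pmod{\F}.
\]

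The only step with real content is moving $\Mu{u}$ past $\Dsh^{N'}$ in the product, and this is exactly what Lemma~\ref{lem:shift}(d) provides. Since $\F$ is an ideal, the error term stays of finite rank after the outer multiplications, so no obstacle is expected.
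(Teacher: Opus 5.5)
Your proposal is correct and follows essentially the same route as the paper: reduce $rs$ modulo $\F$ to $\Dsh^N\Mu{u}\Dsh^{N'}\Mu{u'}$, commute $\Mu{u}$ past $\Dsh^{N'}$ via Lemma~\ref{lem:shift}(d), and handle the sum by equalizing exponents with Lemma~\ref{lem:shift}(b). The only differences are cosmetic: the paper groups the error as $fs+\Dsh^N\Mu{u}g+\Dsh^N(\Mu{u}\Dsh^{N'}-\Dsh^{N'}\Mu{u})\Mu{u'}$ instead of expanding into four terms, and you additionally spell out the ``generated by'' claim, which the paper leaves implicit.
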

\begin{proof}
Taking $u=0$ shows $\F\subseteq R$; taking $(N,u)=(1,1)$, respectively $(0,u)$, shows
$\Dsh\in R$, respectively $\Mu{u}\in R$; and $1=\Dsh^0\Mu{1}\in R$. Write
$f=r-\Dsh^N\Mu{u}$ and $g=s-\Dsh^{N'}\Mu{u'}$. Then
\[
\begin{aligned}
rs - \Dsh^{N+N'}\Mu{uu'}
&= \bigl(f + \Dsh^N\Mu{u}\bigr)s
   - \Dsh^{N+N'}\Mu{uu'} \\[2pt]
&= fs + \Dsh^N\Mu{u}\,s
   - \Dsh^{N+N'}\Mu{uu'} \\[2pt]
&= fs + \Dsh^N\Mu{u}
   \bigl(g + \Dsh^{N'}\Mu{u'}\bigr)
   - \Dsh^{N+N'}\Mu{uu'} \\[2pt]
&= fs + \Dsh^N\Mu{u}\,g
   + \Dsh^N
     \bigl(\Mu{u}\Dsh^{N'} - \Dsh^{N'}\Mu{u}\bigr)
     \Mu{u'}.
\end{aligned}
\]

which lies in $\F$ by Lemma~\ref{lem:shift}(d). By Lemma~\ref{lem:shift}(b),
\[
\Dsh^N\Mu{u}=\Dsh^{N+N'}\Mu{X^{N'}u}\quad\text{and}\quad\Dsh^{N'}\Mu{u'}=\Dsh^{N+N'}\Mu{X^{N}u'}.
\]
Using the fact that $ M_{a + b} = M_a + M_b$, for every $a, b \in \Kx$, we have that  $r+s-\Dsh^{N+N'}\Mu{X^{N'}u+X^Nu'}=f+g\in\F$. Finally
$-r-\Dsh^N\Mu{-u}=-f\in\F$. Hence $R$ is a subring of $\E$.
\end{proof}
\begin{lemma}\label{lem:inverse}
Let $r\in\E$, and suppose that $r-\Dsh^N\Mu{u}\in\F$ with $N\ge0$ and $0\neq u\in\Kx$. Then
there is $s\in R$ such that $sr - 1, rs-1\in\F$. In particular, for every $r\in R\setminus\F$ there
is $s\in R$ with $ sr - 1, 
rs-1\in\F$.
\end{lemma}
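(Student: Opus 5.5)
Since $\F$ is a two-sided ideal of $\E$ and $R$ contains $\F$, it is enough to treat $r=\Dsh^N\Mu{u}$. Indeed, if $r=\Dsh^N\Mu{u}+f$ with $f\in\F$, and $s\in R$ satisfies $s\Dsh^N\Mu{u}-1\in\F$ and $\Dsh^N\Mu{u}s-1\in\F$, then $sr-1=(s\Dsh^N\Mu{u}-1)+sf\in\F$, and similarly $rs-1\in\F$. The plan is to write $u=X^m v$ with $v\in\Kx$ and $\delta(v)\neq0$. This is possible because $u\neq0$. Then $v$ is a unit of $\Kx$, and we take
\[
s=\Dsh^m\Mu{v^{-1}}\Mu{X^N}=\Dsh^m\Mu{X^Nv^{-1}},
\]
which lies in $R$ by Proposition~\ref{prop:subring}.

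The computation proceeds as follows. By Lemma~\ref{lem:shift}(b), $\Dsh^N\Mu{u}=\Dsh^N\Mu{X^m}\Mu{v}$. By Proposition~\ref{prop:subring}, any product of elements of the form $\Dsh^a\Mu{w}$ agrees modulo $\F$ with $\Dsh^{a+a'}\Mu{ww'}$. Hence
\[
rs\equiv \Dsh^{N+m}\Mu{X^m v\cdot X^N v^{-1}}=\Dsh^{N+m}\Mu{X^{N+m}}=1 \pmod{\F},
\]
where the last equality is again Lemma~\ref{lem:shift}(b). The same computation gives $sr\equiv 1 \pmod{\F}$, because Proposition~\ref{prop:subring} is symmetric in the two factors: the product formula gives $\Dsh^{m+N}\Mu{X^Nv^{-1}X^mv}$, and $\Kx$ is commutative. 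Concretely, this rests on the commutation of $\Dsh^k$ with multiplication operators modulo $\F$ (Lemma~\ref{lem:shift}(d)), which lets all the $\Dsh$'s be moved to the left.

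For the ``in particular'' statement, let $r\in R\setminus\F$. By the definition of $R$, there are $N$ and $u$ with $r-\Dsh^N\Mu{u}\in\F$. If $u=0$ then $r\in\F$, which is a contradiction. So $u\neq0$, and the first part applies.

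The only point needing care is the existence of the inverse. This uses the factorization $u=X^m v$ with $v$ invertible in $\Kx$, which holds because a power series with nonzero constant term is invertible. This is where $u\neq0$ is used, and the exponent $m$ is absorbed into the shift by Lemma~\ref{lem:shift}(b). Everything else is bookkeeping modulo the ideal $\F$.
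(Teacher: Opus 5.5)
Your proposal is correct and takes the same route as the paper. Both factor $u=X^m v$ with $v$ a unit, set $s=\Dsh^m\Mu{X^Nv^{-1}}$, and combine the product formula of Proposition~\ref{prop:subring} with $\Dsh^{N+m}\Mu{X^{N+m}}=1$ from Lemma~\ref{lem:shift}(b). Your preliminary reduction to $r=\Dsh^N\Mu{u}$ is harmless but unnecessary, since Proposition~\ref{prop:subring} already applies to $r$ directly.
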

\begin{proof}
Since $u$ is nonzero, we can find integer number $k \geq 0$ and $W \in V$ with $\delta(w)\ne0$ such that  $u=X^kw$. Then $w$ is invertible in $\Kx$. Put
$s=\Dsh^k\Mu{w^{-1}X^N}\in R$. By Proposition~\ref{prop:subring} and
Lemma~\ref{lem:shift}(b),
\[
rs-1=rs-\Dsh^{N+k}\Mu{X^{N+k}}=rs-\Dsh^{N+k}\Mu{u w^{-1}X^N}\in\F.
\]

Also 
$sr-1=sr-D^{k+N}M_{w^{-1}X^Nu}\in\mathcal{F}$, because $uw^{-1}X^N=X^{N+k}$.
This proves the first part of the lemma. For particular part, note that 
if $r\in R\setminus\F$ and $r-\Dsh^N\Mu{u}\in\F$, then $u\neq0$, since otherwise $r\in\F$.
\end{proof}


\section{Primeness and failure of the noetherian conditions}\label{sec:prime}
\begin{proposition}\label{prop:prime}
For all nonzero $a,b\in R$ there is $t\in\F\subseteq R$ with $atb\ne0$. Hence $R$ is a prime
ring; in particular $R$ is semiprime, i.e., $R$ has no nonzero two-sided ideal $I$ with
$I^2=0$.
\end{proposition}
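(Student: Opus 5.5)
The plan is to take $t$ to be a rank-one operator $\tht{w}{\psi}$ and compute $atb$ with \eqref{eq:theta}. Applying \eqref{eq:theta} on both sides gives
\[
a\,\tht{w}{\psi}\,b=\tht{a(w)}{\psi\circ b}.
\]
This operator is nonzero exactly when $a(w)\neq0$ and $\psi\circ b\neq0$. Indeed, $\tht{w'}{\psi'}(x)=\psi'(x)w'$, and if $w'\neq0$ and $\psi'(x)\neq 0$ for some $x$, the value at $x$ is nonzero.

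The choices are as follows. Since $a\neq0$, pick $w\in V$ with $a(w)\neq0$. Since $b\neq0$, pick $x\in V$ with $p:=b(x)\neq0$. As noted before Lemma~\ref{lem:shift}, there is $\psi\in\Vd$ with $\psi(p)\neq0$; concretely one may take $\psi=\delta_n$, where $n$ is an index with $p_n\neq0$. Then $(\psi\circ b)(x)=\psi(p)\neq0$, so
\[
atb(x)=\psi(b(x))\,a(w)\neq0,
\]
with $t=\tht{w}{\psi}\in\F\subseteq R$. The inclusion $\F\subseteq R$ comes from Proposition~\ref{prop:subring}.

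To deduce primeness, let $I,J$ be nonzero two-sided ideals of $R$ and choose nonzero $a\in I$, $b\in J$. Then $atb\in IJ$ and $atb\neq0$, so $IJ\neq0$; equivalently, $aRb\neq0$ for all nonzero $a,b$, which is the standard definition. For semiprimeness, if $I$ is an ideal with $I^2=0$ and $I\neq0$, applying primeness with $J=I$ gives a contradiction.

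I do not expect any step to be a genuine obstacle. The only points to handle carefully are the order of composition in \eqref{eq:theta} and the existence of a functional $\psi$ that does not vanish at $p$; the coordinate functionals $\delta_n$ supply that functional.
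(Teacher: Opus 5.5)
Your proof is correct and is essentially the paper's argument: the paper also takes $t=\theta_{p,\psi}$ with $a(p)\neq0$ and $\psi(b(q))\neq0$, and evaluates $atb(q)=\psi(b(q))\,a(p)\neq0$. The only difference is cosmetic: you derive primeness explicitly via $IJ\neq0$ for nonzero ideals before specializing to $I^2$, whereas the paper goes straight to the semiprime consequence.
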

\begin{proof} Let $r, s \in R$ be nonzero. Then 
choose $p,q\in V$ with $a(p)\ne0$ and $b(q)\ne0$, and choose $\psi\in\Vd$ with
$\psi(b(q))\ne0$. For $t=\tht{p}{\psi}$ we get $atb(q)=\psi(b(q))\,a(p)\ne0$. If $I$ is a
two-sided ideal with $I^2=0$ and $0\ne a\in I$, then $ata\in I^2=0$ for all $t\in R$,
contradicting the first assertion.
\end{proof}

\begin{proposition}\label{prop:notnoeth}
The ideal $\F$ of $R$ is neither finitely generated as a right ideal nor finitely generated as
a left ideal. Consequently $R$ is neither right noetherian nor left noetherian.
\end{proposition}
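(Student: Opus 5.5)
Note first that $\F\subseteq R$ is a two-sided ideal of $R$, being an ideal of $\E$. Both halves of the proposition reduce to a dimension-counting obstruction: a right ideal generated by finitely many finite-rank operators has all images inside one fixed finite-dimensional subspace, and a left ideal generated by finitely many finite-rank operators has all kernels containing one fixed finite-codimensional subspace. Since $\F$ contains operators with arbitrary one-dimensional image and operators with arbitrary one-dimensional "coimage", neither is possible.

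\emph{Right ideal.} Suppose $\F=f_1R+\dots+f_nR$ with $f_i\in\F$. Put $W=\im f_1+\dots+\im f_n$, which is finite-dimensional over $K$. For any $r_i\in R$ we have
\[
\im\Bigl(\sum_i f_ir_i\Bigr)\subseteq\sum_i\im f_i=W.
\]
So every element of $\F$ has image in $W$. Because $V$ is infinite-dimensional, there is $w\in V\setminus W$, for instance $w=X^m$ for $m$ large. The operator $\tht{w}{\delta}\in\F$ sends $1$ to $w\notin W$, a contradiction.

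\emph{Left ideal.} Suppose $\F=Rf_1+\dots+Rf_n$. Put $U=\ker f_1\cap\dots\cap\ker f_n$; each $\ker f_i$ has finite codimension (since $V/\ker f_i\cong\im f_i$), so $U$ has finite codimension in $V$. In particular $U\neq 0$, as $V$ is infinite-dimensional. Every element $\sum_i r_if_i$ vanishes on $U$. Now pick $0\neq p\in U$ and $\psi\in\Vd$ with $\psi(p)\neq0$, which exists by the remark before Lemma~\ref{lem:shift}. Then $\tht{1}{\psi}\in\F$ satisfies $\tht{1}{\psi}(p)=\psi(p)\neq0$, a contradiction.

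Finally, an ideal that is not finitely generated as a right (respectively left) ideal shows that $R$ is not right (respectively left) noetherian. The only point needing care is that the generators may be multiplied by arbitrary elements of $R$, not only of $\F$. This is harmless, because the image bound on the right and the kernel bound on the left hold for arbitrary operators $r_i\in\E$.
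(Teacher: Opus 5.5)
Your proof is correct and matches the paper's own argument: on the right, every element of $f_1R+\dots+f_nR$ has image in the finite-dimensional space $\sum_i\im f_i$, which $\tht{w}{\delta}$ escapes. On the left, every element of $Rf_1+\dots+Rf_n$ vanishes on the nonzero finite-codimensional subspace $\bigcap_i\ker f_i$, which $\tht{1}{\psi}$ does not.
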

\begin{proof}
Suppose $\F=f_1R+\dots+f_nR$ with $f_i\in\F$. Then every element of $\F$ has image contained in
the finite-dimensional subspace $U=\im f_1+\dots+\im f_n$. But for every $w\in V$ the
operator $\tht{w}{\delta}\in\F$ has $w=\tht{w}{\delta}(1)$ in its image, so $U=V$,
contradicting $\dim_K V=\infty$.
Suppose $\F=Rf_1+\dots+Rf_n$ with $f_i\in\F$. Then every element of $\F$ vanishes on
$L=\bigcap_i\ker f_i$. Since $V/L$ embeds into $\bigoplus_i\im f_i$, the subspace $L$ has finite
codimension, so $L\ne0$. Take $0\ne p\in L$ and $\psi\in\Vd$ with $\psi(p)\neq0$. Then
$\tht{1}{\psi}\in\F$ and $\tht{1}{\psi}(p)=\psi(p)\ne0$, a contradiction.
\end{proof}

\section{The right ideals of \texorpdfstring{$R$}{R}}\label{sec:rightideals}
Right ideals of $R$ are described in this section. We give a fully characterization of them in Theorem \ref{thm:classification}.

\begin{definition}
For a right ideal $A$ of $R$ put
\[
W(A)=\{\,w\in V:\ \tht{w}{\psi}\in A\ \text{for every}\ \psi\in\Vd\,\}.
\]
For a subspace $W$ of $V$ put
\[
\F_W=\{\,f\in\F:\ \im f\subseteq W\,\}.
\]
\end{definition}
By bilinearity of $\theta$, $W(A)$ is a $K$-subspace of $V$, and clearly $A\subseteq B$, where $B$ is also a right ideal of $R$, 
implies $W(A)\subseteq W(B)$. Since $\im(fr)\subseteq\im f$, $\F_W$ is a right ideal of $R$.
The following lemma shows that the right ideals of $R$ contained in $ \F$ are exactly in form of 
$ \F _ W $, for some subspace $ W$ of $V$. 
\begin{lemma}\label{lem:WA}
Let $A$ be a right ideal of $R$.
\begin{enumerate}[label=\textup{(\alph*)}]
\item If $a\in A$, then $\im a\subseteq W(A)$.
\item If $f\in\F$ and $\im f\subseteq W(A)$, then $f\in A$.
\item If $A\subseteq\F$, then $A=\F_{W(A)}$.
\end{enumerate}
\end{lemma}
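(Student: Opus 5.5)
Parts (a) and (b) come first, since (c) follows from them directly. Throughout I use the identities \eqref{eq:theta} and the decomposition \eqref{eq:decomp}.

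For (a), let $a\in A$ and $w\in\im a$, say $w=a(p)$ with $p\in V$. Fix an arbitrary $\psi\in\Vd$. We want an element $t\in R$ with $a\,t=\tht{w}{\psi}$. Since $A$ is a right ideal and $t\in\F\subseteq R$ (Proposition~\ref{prop:subring}), this would give $\tht{w}{\psi}\in A$. Take $t=\tht{p}{\psi}$. By \eqref{eq:theta},
\[
a\,\tht{p}{\psi}=\tht{a(p)}{\psi}=\tht{w}{\psi}.
\]
As $\psi$ was arbitrary, $w\in W(A)$, and therefore $\im a\subseteq W(A)$.

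For (b), let $f\in\F$ with $\im f\subseteq W(A)$. Choose a basis $w_1,\dots,w_n$ of $\im f$ and write $f=\sum_i\tht{w_i}{\chi_i}$ as in \eqref{eq:decomp}. Each $w_i$ lies in $\im f\subseteq W(A)$, so by the definition of $W(A)$ we have $\tht{w_i}{\chi_i}\in A$. Since $A$ is closed under addition, $f\in A$.

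For (c), assume $A\subseteq\F$. If $a\in A$, then $a\in\F$, and by (a) $\im a\subseteq W(A)$; hence $a\in\F_{W(A)}$. Conversely, if $f\in\F_{W(A)}$, then $f\in\F$ and $\im f\subseteq W(A)$, so $f\in A$ by (b). Thus $A=\F_{W(A)}$.

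The only step that needs any care is the choice of $t$ in (a). It has to be a finite-rank operator mapping into $Kp$ so that composing with $a$ produces $\tht{w}{\psi}$, and it has to lie in $R$; both hold because $\F\subseteq R$. Everything else is routine bookkeeping.
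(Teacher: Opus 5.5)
Your proof is correct and follows the paper's argument essentially verbatim. You get (a) from $a\,\tht{p}{\psi}=\tht{a(p)}{\psi}\in aR\subseteq A$, you get (b) from the decomposition \eqref{eq:decomp} with each $w_i\in\im f\subseteq W(A)$, and (c) is immediate from (a) and (b).
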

\begin{proof}
(a) Let $ p\in V$ and $\psi\in\Vd$. By~\eqref{eq:theta},
$\tht{a(p)}{\psi} =a\,\tht{p}{\psi}\in aR\subseteq A$. Hence $a(p)\in W(A)$. \\
(b) Write $f=\sum_i\tht{w_i}{\chi_i}$ as in~\eqref{eq:decomp}, with $w_i\in\im f\subseteq W(A)$.
Each summand lies in $A$ by definition of $W(A)$, hence $f\in A$. \\
(c) follows from (a) and (b).
\end{proof}
\begin{lemma}\label{lem:finrank-iso}
Let $W,W'$ be subspaces of $V$ with $\dim_K W=\dim_K W'$ \textup{(}as cardinal
numbers\textup{)}. Then $\F_W\cong\F_{W'}$ as right $R$-modules.
\end{lemma}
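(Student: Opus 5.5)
Since $\dim_K W=\dim_K W'$, there is a $K$-linear isomorphism $\sigma\colon W\to W'$. The plan is to extend $\sigma$ to an operator on all of $V$ and use left multiplication by it. Choose complements $V=W\oplus C=W'\oplus C'$, and define $\tau\in\E$ by $\tau|_W=\sigma$ and $\tau|_C=0$. Define $\tau'\in\E$ by $\tau'|_{W'}=\sigma^{-1}$ and $\tau'|_{C'}=0$. Then $\tau'\tau$ is the identity on $W$ and $\tau\tau'$ is the identity on $W'$.

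The candidate isomorphism is
\[
\Phi\colon\F_W\to\F_{W'},\qquad \Phi(f)=\tau f.
\]
We need to check three things.
\begin{enumerate}[label=\textup{(\roman*)}]
\item $\tau f$ has finite rank, because its image $\tau(\im f)$ is finite-dimensional, and $\im(\tau f)=\sigma(\im f)\subseteq W'$. So $\tau f\in\F_{W'}$. Here we only use that $\F$ is an ideal of $\E$, not that $\tau\in R$.
\item $\Phi$ is additive, and $\Phi(fr)=\tau fr=\Phi(f)r$ for all $r\in R$, by associativity of composition. So $\Phi$ is a right $R$-module homomorphism.
\item Define $\Phi'(g)=\tau' g$ on $\F_{W'}$. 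For $f\in\F_W$ we have $\im f\subseteq W$, so $\tau'\tau f=f$. Symmetrically, $\tau\tau' g=g$ for $g\in\F_{W'}$. Hence $\Phi$ and $\Phi'$ are mutually inverse.
\end{enumerate}

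The only subtle point is that $\tau$ need not lie in $R$. So $\Phi$ is not left multiplication by a ring element, and we must not appeal to any ring-theoretic lemma about elements of $R$. This causes no difficulty: $\F$ is a two-sided ideal of the full ring $\E$, and the $R$-linearity in (ii) comes purely from the module action being right composition. The existence of the complements $C$ and $C'$ uses the existence of bases (Zorn's lemma), which is standard. No other obstacle is expected.
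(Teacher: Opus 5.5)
Your proof is correct and follows the paper's argument: the paper also extends $e\colon W\to W'$ and $e^{-1}$ to operators on $V$, maps $f\mapsto\sigma f$, checks that this preserves finite rank and image containment and is right $R$-linear, and shows the two maps are mutually inverse. Extending by zero on chosen complements is just one concrete way to build the extensions that the paper leaves unspecified, and your remark that these operators need not lie in $R$ is accurate but harmless.
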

\begin{proof}
Let $e\colon W\to W'$ be a $K$-linear isomorphism. Extend $e$ and $e^{-1}$ to $K$-linear maps
$\sigma,\tau\in\E$, so that $\sigma|_W=e$ and $\tau|_{W'}=e^{-1}$. For $f\in\F_W$ we have
$\sigma f\in\F$ and $\im(\sigma f)=e(\im f)\subseteq W'$, so $f\mapsto\sigma f$ is a map
$\F_W\to\F_{W'}$; it is a homomorphism of right $R$-modules because
$\sigma(fr)=(\sigma f)r$. Similarly $g\mapsto\tau g$ is a homomorphism $\F_{W'}\to\F_W$. For
$f\in\F_W$ we have $\tau\sigma f=f$, since $\tau\sigma$ is the identity on $W\supseteq\im f$;
similarly $\sigma\tau g=g$ for $g\in\F_{W'}$. Hence the two maps are mutually inverse.
\end{proof}
\begin{lemma}\label{lem:infinite}
Let $A$ be a right ideal of $R$ with $A\not\subseteq\F$. Then:
\begin{enumerate}[label=\textup{(\alph*)}]
\item $A=\{\,r\in R:\ \im r\subseteq W(A)\,\}$;
\item there is $g\in\F$ with $\ker g\subseteq W(A)$.
\end{enumerate}
\end{lemma}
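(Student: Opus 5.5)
Both parts rest on one observation. Take $a\in A\setminus\F$. By Lemma~\ref{lem:inverse} there is $s\in R$ with $as-1=:h\in\F$, so
\[
1+h=as\in A .
\]

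\textbf{Part (b).} Set $g=-h\in\F$. If $p\in\ker g$, then $h(p)=0$, so $p=(1+h)(p)\in\im(as)$. By Lemma~\ref{lem:WA}(a) applied to $as\in A$, we get $\im(as)\subseteq W(A)$. Hence $\ker g\subseteq W(A)$, which proves (b).

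\textbf{Part (a), inclusion $\subseteq$.} This is immediate from Lemma~\ref{lem:WA}(a): every $r\in A$ has $\im r\subseteq W(A)$.

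\textbf{Part (a), inclusion $\supseteq$.} Let $r\in R$ with $\im r\subseteq W(A)$. The plan is to split $r$ into a piece already in $A$ and a finite-rank piece:
\[
r=(1+h)r-hr .
\]
The first term $(1+h)r$ lies in $A$, because $1+h\in A$ and $A$ is a right ideal. The second term $hr$ lies in $\F$, since $\F$ is an ideal. Since also $(1+h)r\in A$, we have $\im\bigl((1+h)r\bigr)\subseteq W(A)$ by Lemma~\ref{lem:WA}(a). Therefore
\[
\im(hr)=\im\bigl((1+h)r-r\bigr)\subseteq W(A),
\]
because both $\im\bigl((1+h)r\bigr)$ and $\im r$ lie in the subspace $W(A)$. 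Now $hr$ has finite rank and image inside $W(A)$, so Lemma~\ref{lem:WA}(b) gives $hr\in A$. It follows that $r\in A$.

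The main obstacle was seeing that a single element of the form $1+h$ with $h\in\F$ controls everything. Lemma~\ref{lem:inverse} supplies this element, and the decomposition $r=(1+h)r-hr$ then reduces the general case to the finite-rank case already settled in Lemma~\ref{lem:WA}.
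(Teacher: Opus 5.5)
Your proof is correct and matches the paper's argument: the paper also takes the element $as=1+g\in A$ from Lemma~\ref{lem:inverse}, uses the decomposition $r=(as)r-gr$ together with Lemma~\ref{lem:WA}(a),(b) for part (a), and uses $p=as(p)$ for $p\in\ker g$ in part (b). The only difference is cosmetic. Your sign change $g=-h$ is unnecessary, since $\ker(-h)=\ker h$, so the paper simply takes $g=as-1$.
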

\begin{proof}
Let $a\in A\setminus\F$. By Lemma~\ref{lem:inverse} there is $s\in R$ with $g:=as-1\in\F$.
Put $e=as=1+g\in A$. \\
(a) One inclusion is Lemma~\ref{lem:WA}(a). Conversely, let $r\in R$ with
$\im r\subseteq W(A)$. Then $er\in A$, so $\im(er)\subseteq W(A)$ by Lemma~\ref{lem:WA}(a).
Hence $gr=er-r\in\F$ has image contained in $W(A)$, and $gr\in A$ by Lemma~\ref{lem:WA}(b).
Therefore $r=er-gr\in A$. \\
(b) If $g(p)=0$, for some $p \in V$, then $p=e(p)\in\im e\subseteq W(A)$ by Lemma~\ref{lem:WA}(a).
\end{proof}
The key technical step is the following.
\begin{definition}
A subspace $W$ of $V$ is \emph{good} if there are $u,v\in R$ such that $u$ is injective,
$\im u=W$, and $u(v(w))=w$ for every $w\in W$. We then say that $u,v$ \emph{witness} that $W$
is good.
\end{definition}
\begin{proposition}\label{prop:good}
Let $f\in\F$. Every subspace $W$ of $V$ with $\ker f\subseteq W$ is good.
\end{proposition}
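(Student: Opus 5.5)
The plan is to show first that $W$ has finite codimension in $V$. Then we build $u$ as a finite-rank perturbation of $\Mu{X^c}$, where $c=\dim_K(V/W)$. The witness $v$ will be a finite-rank perturbation of $\Dsh^c$.

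\emph{Step 1.} Since $f\in\F$, the quotient $V/\ker f\cong\im f$ is finite-dimensional. As $\ker f\subseteq W$, the quotient $V/W$ is finite-dimensional as well; put $c=\dim_K(V/W)$.

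\emph{Step 2.} Put $W_0=\im \Mu{X^c}=X^cV$, which also has codimension $c$ in $V$. Then $W\cap W_0$ has finite codimension $e$ in $V$, hence codimension $e-c$ both in $W$ and in $W_0$. Put $Y=\Mu{X^c}^{-1}(W\cap W_0)$. Because $\Mu{X^c}$ is an isomorphism $V\to W_0$, the subspace $Y$ has codimension $e-c$ in $V$. Choose complements $V=Y\oplus Z$ and $W=(W\cap W_0)\oplus Z'$; then $\dim Z=\dim Z'=e-c$, which is finite. Define $u\in\E$ by $u|_Y=\Mu{X^c}|_Y$ and $u|_Z$ any linear isomorphism $Z\to Z'$. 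Then $u$ maps $Y$ bijectively onto $W\cap W_0$ and $Z$ bijectively onto $Z'$. Hence $u$ is injective with $\im u=W$.

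\emph{Step 3.} We check $u\in R$ using the following fact: an operator whose kernel has finite codimension has finite rank, since its image is isomorphic to $V$ modulo its kernel. The operator $u-\Mu{X^c}$ vanishes on $Y$, which has finite codimension, so $u-\Dsh^0\Mu{X^c}\in\F$ and therefore $u\in R$.

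\emph{Step 4.} Fix a complement $V=W\oplus C$, with $\dim C=c$. Define $v\in\E$ by $v|_W=u^{-1}\colon W\to V$ and $v|_C=0$. Then $u(v(w))=w$ for all $w\in W$.

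\emph{Step 5.} We check $v\in R$. For $w\in W\cap W_0$ we have $w=X^cy$ with $y\in Y$. By Lemma~\ref{lem:shift}(b), $v(w)=y=\Dsh^c\Mu{X^c}(y)=\Dsh^c(w)$. So $v-\Dsh^c\Mu{1}$ vanishes on $W\cap W_0$, which has finite codimension in $V$. Hence $v-\Dsh^c\Mu{1}\in\F$ and $v\in R$.

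The only delicate point is the dimension count in Step 2. We need the complements $Z$ and $Z'$ to have equal finite dimension, so that $u$ can be a bijection onto $W$ while differing from $\Mu{X^c}$ only on a finite-dimensional piece. This works precisely because $W$ and $W_0$ have the same finite codimension $c$.
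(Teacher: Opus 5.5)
Your proof is correct, and it takes a genuinely different route from the paper.

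The paper reduces to hyperplanes, in three steps:
\begin{itemize}
\item for $W=\ker\varphi$ it writes down explicit witnesses $u=g\Mu{X}$ and $v=\Dsh\pi$, where $g,\pi$ are rank-one perturbations of the identity (Lemma~\ref{lem:good-hyperplane});
\item it shows that goodness can be pulled back along a witness (Lemma~\ref{lem:good-comp});
\item it inducts on the number of vectors needed to span a complement (Lemma~\ref{lem:good-fincodim}).
\end{itemize}
The final $u$ is therefore a product of such hyperplane witnesses, and membership in $R$ comes from Proposition~\ref{prop:subring}.

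You instead work directly from the definition of $R$. An operator that vanishes on a subspace of finite codimension has finite rank. Hence any operator agreeing with $\Dsh^N\Mu{u}$ on such a subspace lies in $R$. This lets you build $u$ and $v$ by linear algebra in a single step. The only input is the count $\dim_K V/X^cV=c=\dim_K V/W$. In index terms: $\Mu{X^c}$ is injective with $c$-dimensional cokernel, so correcting it on a finite-dimensional piece gives a bijection onto any $W$ of codimension $c$.

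Your argument is shorter and needs neither the induction nor the composition lemma. It also gives the extra information $u-\Mu{X^c}\in\F$ and $v-\Dsh^c\in\F$. The paper's approach, in exchange, produces explicit closed-form witnesses and never needs to check a finite-rank condition by hand.
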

We prove Proposition~\ref{prop:good} in several steps. Note that $V$ itself is good (take
$u=v=1$).
\begin{lemma}\label{lem:good-hyperplane}
If $0\ne\varphi\in\Vd$, then $\ker\varphi$ is good.
\end{lemma}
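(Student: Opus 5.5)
The plan is to take $u$ to be a rank-one perturbation of $\Mu{X}$ and $v$ a rank-one perturbation of $\Dsh$. Since $\Mu{X}$ and $\Dsh$ lie in $R$ and $R\supseteq\F$, any such perturbations automatically lie in $R$ by Proposition~\ref{prop:subring}.

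\textbf{Choice of $p$.} First choose $p\in V$ with $\varphi(p)=1$ and $\delta(p)\ne0$.
\begin{itemize}
\item If $\ker\varphi=\ker\delta$, then $\ker\varphi=XV$. In this case take $u=\Mu{X}$ and $v=\Dsh$. The map $\Mu{X}$ is injective with image $XV$, and $\Mu{X}\Dsh(w)=w$ whenever $\delta(w)=0$ by Lemma~\ref{lem:shift}(a). So we may assume $\ker\varphi\ne\ker\delta$.
\item Otherwise, pick any $p_0$ with $\varphi(p_0)=1$. If $\delta(p_0)=0$, the two distinct hyperplanes $\ker\varphi$ and $\ker\delta$ satisfy $\ker\varphi\not\subseteq\ker\delta$. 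Hence there is $q\in\ker\varphi$ with $\delta(q)\ne0$, and we replace $p_0$ by $p=p_0+q$.
\end{itemize}

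\textbf{Definition of $u$ and $v$.} Set
\[
u=\Mu{X}-\tht{p}{\varphi\circ\Mu{X}},\qquad v=\Dsh-\tht{\Dsh(p)}{\delta(p)^{-1}\delta}.
\]
Both are in $R$, since each differs from $\Mu{X}$, respectively $\Dsh$, by a finite-rank operator. Explicitly,
\[
u(x)=Xx-\varphi(Xx)\,p,\qquad v(w)=\Dsh\bigl(w+c_w p\bigr),\quad\text{where } c_w=-\delta(w)/\delta(p).
\]

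\textbf{Verification.} Three things need checking.
\begin{enumerate}
\item $\im u\subseteq\ker\varphi$: we have $\varphi(u(x))=\varphi(Xx)-\varphi(Xx)\varphi(p)=0$.
\item $u$ is injective: if $u(x)=0$, then $Xx=\varphi(Xx)p$. Applying $\delta$ gives $0=\varphi(Xx)\delta(p)$, so $\varphi(Xx)=0$. Hence $Xx=0$, and $x=0$ because $\Mu{X}$ is injective.
\item $u(v(w))=w$ for $w\in\ker\varphi$: the element $w+c_wp$ has $\delta(w+c_wp)=0$, so by Lemma~\ref{lem:shift}(a) we get $X\,v(w)=w+c_wp$. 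Then $\varphi(X v(w))=\varphi(w)+c_w\varphi(p)=c_w$, and therefore $u(v(w))=w+c_wp-c_wp=w$.
\end{enumerate}
The third check also gives $\ker\varphi\subseteq\im u$, so $\im u=\ker\varphi$ and $u,v$ witness that $\ker\varphi$ is good.

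The only real subtlety is the choice of $p$ with both $\varphi(p)=1$ and $\delta(p)\ne0$, which the injectivity argument needs. This is what forces the separate treatment of the case $\ker\varphi=\ker\delta$. All remaining steps are direct computations using Lemma~\ref{lem:shift}(a).
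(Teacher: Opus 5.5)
Your proof is correct and is essentially the paper's: setting $p=\varphi(a)^{-1}a$, the paper's operators $u=g\Mu{X}$ and $v=\Dsh\pi$ are exactly your $u(x)=Xx-\varphi(Xx)p$ and $v(w)=\Dsh\bigl(w-\delta(p)^{-1}\delta(w)p\bigr)$, and your three verifications are the same checks. One small remark: the separate case $\ker\varphi=\ker\delta$ is not actually forced, because then $\varphi$ is a nonzero multiple of $\delta$, so any $p$ with $\varphi(p)=1$ already has $\delta(p)\neq0$ (and your general $u$ then reduces to $\Mu{X}$).
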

\begin{proof}
First we find $a\in V$ with $\delta(a)\ne0$ and $\varphi(a)\ne0$. Choose $a_1$ with
$\varphi(a_1)\ne0$. If $\delta(a_1)\ne0$, take $a=a_1$. If $\delta(a_1)=0$, take $a=1$ when
$\varphi(1)\neq0$, and $a=a_1+1$ when $\varphi(1)=0$; in both cases $\delta(a)=1$ and
$\varphi(a)\neq0$.
Put $\alpha=\delta(a)^{-1}\delta\in\Vd$ and $\beta=\varphi(a)^{-1}\varphi\in\Vd$, so that
$\alpha(a)=\beta(a)=1$, $\ker\alpha=\ker\delta$ and $\ker\beta=\ker\varphi$. \\ Define
\[
g=1+\tht{a}{\alpha-\beta},\qquad \pi=1-\tht{a}{\alpha},\qquad u=g\Mu{X},\qquad v=\Dsh\pi .
\]
By Proposition~\ref{prop:subring}, $g,\pi,u,v\in R$. We check that $u,v$ witness that
$\ker\varphi$ is good. \\
\emph{$u$ is injective:} Since $M_X$ is injective, it is enough to show  that $g$  is injective. 
If $g(p)=0$, for some $ p \in V$, then $p=-c\,a$ with $c=\alpha(p)-\beta(p)$. Applying
$\alpha$ and $\beta$ gives $\alpha(p)=-c=\beta(p)$. Since 
 $c=\alpha(p)-\beta(p)$, we have that  $c=0$ and $p=0$.  That is $g$ and consequently $u$ is injective. \\
 
\emph{$\im u = \ker\varphi$: } Let $p\in V$ and $q=Xp$. Then $\alpha(q)=0$, so
$g(q)=q-\beta(q)a$ and $\beta(g(q))=\beta(q)-\beta(q)=0$; thus
$g(q) $ is  the kernel of $ \beta$ which is equal to the kernel  of $\varphi$. 
Thus 
$\varphi(u(p))=0$ and consequently  $\im u \subseteq \ker\varphi$. To show the equaality we prove that 
{$u(v(h))=h$ for $h\in\ker\varphi$.} Let $h\in\ker\varphi$. We have
$\pi(h)=h-\alpha(h)a$, so $\alpha(\pi(h))=0$, i.e., $\delta(\pi(h))=0$, and
$\beta(\pi(h))=-\alpha(h)$. By Lemma~\ref{lem:shift}(a), $\Mu{X}\Dsh\pi(h)=\pi(h)$, and therefore
\[
u(v(h))=g(\pi(h))=\pi(h)+\bigl(\alpha(\pi(h))-\beta(\pi(h))\bigr)a=h-\alpha(h)a+\alpha(h)a=h.
\]
In particular $\ker\varphi\subseteq\im u$, so $\im u=\ker\varphi$.
\end{proof}
\begin{lemma}\label{lem:good-codim1}
If $W$ is a subspace of $V$ and $c\in V$ satisfies $W+Kc=V$, then $W$ is good.
\end{lemma}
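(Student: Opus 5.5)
Lemma~\ref{lem:good-codim1} reduces to Lemma~\ref{lem:good-hyperplane} plus the trivial case. If $W=V$, then $W$ is good with witnesses $u=v=1$. So we may assume $W\neq V$.

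\emph{Step 1: $W$ is a hyperplane.} Since $W+Kc=V$ and $W\neq V$, we have $c\notin W$. The quotient $V/W$ is spanned by the image of $c$, so it is one-dimensional. Hence $V=W\oplus Kc$, and $W$ has codimension exactly one.

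\emph{Step 2: write $W$ as a kernel.} Define $\varphi\in\Vd$ by $\varphi(w+\lambda c)=\lambda$ for $w\in W$ and $\lambda\in K$. This is well defined by the direct sum decomposition. It is $K$-linear, nonzero because $\varphi(c)=1$, and $\ker\varphi=W$. We use here that $\Vd$ is the full algebraic dual, so no continuity or other condition on $\varphi$ is required. This matters, since $W$ need not be closed in any topology on $K[[X]]$.

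\emph{Step 3: conclude.} Lemma~\ref{lem:good-hyperplane} applied to $\varphi$ shows that $\ker\varphi=W$ is good. The witnesses are $u=g\Mu{X}$ and $v=\Dsh\pi$ built there.

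There is no real obstacle. The only point needing care is the case split $W=V$ versus $c\notin W$, because Lemma~\ref{lem:good-hyperplane} requires $\varphi\neq0$. Lemma~\ref{lem:good-hyperplane} already does the substantive work of producing injective $u\in R$ with prescribed image and a right inverse on it.
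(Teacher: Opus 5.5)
Your proof is correct and matches the paper's argument: you dispose of the trivial case $W=V$ (equivalently $c\in W$), then take $\varphi\in\Vd$ with $\varphi(W)=0$ and $\varphi(c)=1$, note $\ker\varphi=W$, and apply Lemma~\ref{lem:good-hyperplane}. Your explicit justification that $V=W\oplus Kc$ makes $\varphi$ well defined is a detail the paper leaves implicit.
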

\begin{proof}
If $c\in W$, then $W=V$ is good. Otherwise there is $\varphi\in\Vd$ with $\varphi(W)=0$ and
$\varphi(c)=1$, and then $\ker\varphi=W$ because $W+Kc=V$. Apply
Lemma~\ref{lem:good-hyperplane}.
\end{proof}
\begin{lemma}\label{lem:good-comp}
Let $W\subseteq W'$ be subspaces of $V$, and suppose that $u',v'\in R$ witness that $W'$ is
good. If the subspace $u'^{-1}(W)=\{x\in V: u'(x)\in W\}$ is good, then $W$ is good.
\end{lemma}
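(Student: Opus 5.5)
The plan is to compose the witnesses. Let $u'',v''\in R$ witness that $W'':=u'^{-1}(W)$ is good. We will show that
\[
u:=u'u'', \qquad v:=v''v'
\]
witness that $W$ is good. Both lie in $R$ by Proposition~\ref{prop:subring}. Three properties then need checking: $u$ is injective, $\im u=W$, and $u(v(w))=w$ for every $w\in W$.

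\emph{Injectivity.} Since $u'$ and $u''$ are injective, so is their composite $u=u'u''$.

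\emph{The image of $u$.} We have $\im u=u'(\im u'')=u'(W'')=u'\bigl(u'^{-1}(W)\bigr)\subseteq W$. The reverse inclusion comes from the third property: once $u(v(w))=w$ for all $w\in W$, every $w\in W$ lies in $\im u$, so $\im u=W$.

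\emph{The identity $u(v(w))=w$.} Let $w\in W$ and put $x=v'(w)$. Since $w\in W\subseteq W'$, the witness property of $u',v'$ gives $u'(x)=w$. Because $w\in W$, this places $x$ in $u'^{-1}(W)=W''$. The witness property of $u'',v''$ then gives $u''(v''(x))=x$. Hence
\[
u(v(w))=u'\bigl(u''(v''(x))\bigr)=u'(x)=w.
\]

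The only point needing care is this last step. We must check that $v'(w)$ actually lands in $W''$ before applying the witness property of $u'',v''$, and this uses both $W\subseteq W'$ and the definition of $W''$ as a preimage. Beyond that, everything is formal.
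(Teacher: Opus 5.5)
Your proof is correct and matches the paper's: compose the witnesses as $u=u'u''$, $v=v''v'$, and use $W\subseteq W'$ to get $v'(w)\in u'^{-1}(W)$ before applying the second witness. You also state explicitly that $W\subseteq\im u$ follows from $u(v(w))=w$, which the paper leaves implicit.
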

\begin{proof}
Let $u_0,v_0\in R$ witness that $u'^{-1}(W)$ is good, and put $u=u'u_0$, $v=v_0v'$. Then $u$
is injective and
$\im u=u'\bigl(u'^{-1}(W)\bigr) \subseteq W$. If $w\in W$, then
$u'(v'(w))=w\in W$, so $v'(w)\in u'^{-1}(W)$, and hence
$u(v(w))=u'\bigl(u_0v_0(v'(w))\bigr)=u'(v'(w))=w$.
\end{proof}
\begin{lemma}\label{lem:good-fincodim}
If $W$ is a subspace of $V$ and $c_1,\dots,c_n\in V$ satisfy
$W+Kc_1+\dots+Kc_n=V$, then $W$ is good.
\end{lemma}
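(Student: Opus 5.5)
We prove the statement by induction on $n$, using Lemma~\ref{lem:good-codim1} as the base case and Lemma~\ref{lem:good-comp} for the inductive step. For $n=0$ we have $W=V$, which is good (take $u=v=1$). For $n=1$ the claim is exactly Lemma~\ref{lem:good-codim1}. Suppose $n\ge 2$ and that the statement holds for fewer than $n$ vectors. Put $W'=W+Kc_n$. Then $W'+Kc_1+\dots+Kc_{n-1}=V$, so $W'$ is good by the induction hypothesis. Let $u',v'\in R$ witness this.

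By Lemma~\ref{lem:good-comp}, it remains to show that $U:=u'^{-1}(W)$ is good. We will exhibit a single vector $c\in V$ with $U+Kc=V$; then Lemma~\ref{lem:good-codim1} finishes the proof.

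The set $U$ is the preimage of $W$ under the injective map $u'\colon V\to W'$. Since $W'=W+Kc_n$, the quotient $W'/W$ has dimension at most one. The composite $V\xrightarrow{u'}W'\to W'/W$ is surjective: $\im u'=W'$ by the definition of a witness. Its kernel is exactly $U$. Hence $V/U\cong W'/W$ has dimension at most one.
\begin{itemize}
\item If $c_n\in W$, then $U=V$, and we may take $c=0$.
\item Otherwise $c_n=u'(x)$ for some $x\in V$, because $c_n\in W'=\im u'$. For any $y\in V$ write $u'(y)=w+\lambda c_n$ with $w\in W$ and $\lambda\in K$. Then $u'(y-\lambda x)=w\in W$, so $y-\lambda x\in U$. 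Thus $U+Kx=V$, and we take $c=x$.
\end{itemize}

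The only real point is the surjectivity $\im u'=W'$, which is part of the definition of a witness. It is what makes the codimension of $W$ in $W'$ pull back to the codimension of $U$ in $V$. The rest is bookkeeping with Lemmas~\ref{lem:good-codim1} and~\ref{lem:good-comp}.
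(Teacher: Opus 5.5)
Your proof is correct and follows the paper's argument. In both, you enlarge $W$ by one vector, get witnesses $u',v'$ for $W'$ by induction, show $u'^{-1}(W)+Kx=V$, and conclude with Lemma~\ref{lem:good-codim1} and Lemma~\ref{lem:good-comp}. The only cosmetic differences are that you adjoin $c_n$ rather than $c_1$, and that you take an arbitrary preimage $x$ of $c_n$ under $u'$; by injectivity of $u'$ this $x$ is exactly the paper's $v'(c_n)$.
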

\begin{proof}
Induction on $n$, the case $n=0$ being trivial.
Case $n = 1$ is Lemma \ref{lem:good-codim1}. Let $n\ge2$ and put $W'=W+Kc_1$. Since
$W'+Kc_2+\dots+Kc_n=V$, the subspace $W'$ is good by induction, witnessed by $u',v'$, say. We
claim that $u'^{-1}(W)+K\,v'(c_1)=V$. Let $x\in V$. Since $u'(x)\in W'$, we can write
$u'(x)=w+tc_1$ with $w\in W$ and $t\in K$. As $c_1\in W'$, we have $u'(v'(c_1))=c_1$, whence
$u'(x-t\,v'(c_1))=w\in W$, i.e., $x-t\,v'(c_1)\in u'^{-1}(W)$. This proves the claim. By
Lemma~\ref{lem:good-codim1}, $u'^{-1}(W)$ is good, and by Lemma~\ref{lem:good-comp}, $W$ is
good.
\end{proof}
\begin{proof}[Proof of Proposition~\ref{prop:good}]
Let $C$ be a complement of $\ker f$ in $V$. Then $C\cong V/\ker f\cong\im f$ is
finite-dimensional; let $c_1,\dots,c_n$ be a basis of $C$. Then
$W+Kc_1+\dots+Kc_n\supseteq\ker f+C=V$, and Lemma~\ref{lem:good-fincodim} applies.
\end{proof}
\begin{lemma}\label{lem:iso-R}
Let $W$ be a good subspace of $V$, and let $A$ be a right ideal of $R$ such that
$A=\{\,r\in R:\im r\subseteq W\,\}$. Then $A\cong R_R$.
\end{lemma}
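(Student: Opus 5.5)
Let $u,v\in R$ witness that $W$ is good. The plan is to show that left multiplication by $u$ gives an isomorphism $\lambda\colon R_R\to A$, $r\mapsto ur$, with inverse $a\mapsto va$.

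First, $\lambda$ is well defined. For $r\in R$ the product $ur$ lies in $R$, because $R$ is a ring by Proposition~\ref{prop:subring}. Moreover $\im(ur)\subseteq\im u=W$, so $ur\in A$ by the description of $A$ in the hypothesis. The map is clearly a homomorphism of right $R$-modules, since $u(rs)=(ur)s$. It is injective because $u$ is an injective operator: if $ur=0$, then $u(r(x))=0$ for every $x\in V$, hence $r(x)=0$ for all $x$, so $r=0$.

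For surjectivity, let $a\in A$, so that $\im a\subseteq W$. Put $r=va\in R$. For every $x\in V$ we have $a(x)\in W$, so the witnessing property gives $u(v(a(x)))=a(x)$. Thus $ur=a$, and $\lambda$ is onto. Hence $A\cong R_R$.

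The only subtle point is that the equation $uv=1$ holds only on $W$, not on all of $V$. This is exactly why surjectivity needs $\im a\subseteq W$, and why membership of $va$ in $R$ relies on $v\in R$ rather than on $v$ being an arbitrary linear map. Both facts are built into the definition of a good subspace, so I expect no real obstacle here; the substantive work was already done in Proposition~\ref{prop:good}.
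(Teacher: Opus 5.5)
Your proof is correct and follows the paper's argument exactly: $r\mapsto ur$ is an injective right $R$-module map into $A$ because $u$ is injective with $\im u=W$, and it is onto because $u(v(a(x)))=a(x)$ for $a\in A$. Your claim that $a\mapsto va$ is the inverse also holds, since $\lambda(vur)=uvur=ur=\lambda(r)$ and injectivity give $vur=r$.
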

\begin{proof}
Let $u,v\in R$ witness that $W$ is good. For $r\in R$ we have $\im(ur)\subseteq\im u=W$, so
$ur\in A$, and $\lambda\colon R\to A$, $\lambda(r)=ur$, is a homomorphism of right
$R$-modules. It is injective because $u$ is injective. It is surjective: if $a\in A$, then
$va\in R$ and $u(v(a(x)))=a(x)$ for all $x\in V$ since $a(x)\in W$; thus $\lambda(va)=a$.
\end{proof}
Combining the preceding results we obtain a classification of the right ideals of $R$ up to
isomorphism.
\begin{theorem}\label{thm:classification}
Let $A$ be a right ideal of $R$.
\begin{enumerate}[label=\textup{(\alph*)}]
\item If $A\not\subseteq\F$, then $A\cong R_R$.
\item If $A\subseteq\F$, then $A=\F_{W(A)}$. If $B\subseteq \F$ is another right ideal and
$\dim_K W(A)=\dim_K W(B)$, then $A\cong B$.
\end{enumerate}
\end{theorem}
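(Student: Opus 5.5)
The proof assembles the lemmas of this section, so I will treat the two parts separately.

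For part (a), let $A$ be a right ideal with $A\not\subseteq\F$. By Lemma~\ref{lem:infinite}(a), $A=\{\,r\in R:\ \im r\subseteq W(A)\,\}$. By Lemma~\ref{lem:infinite}(b), there is $g\in\F$ with $\ker g\subseteq W(A)$. Proposition~\ref{prop:good} then says that $W(A)$ is good. Lemma~\ref{lem:iso-R}, applied with $W=W(A)$, gives $A\cong R_R$.

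For part (b), the equality $A=\F_{W(A)}$ is exactly Lemma~\ref{lem:WA}(c). If $B\subseteq\F$ is another right ideal, the same lemma gives $B=\F_{W(B)}$. The hypothesis $\dim_K W(A)=\dim_K W(B)$ lets Lemma~\ref{lem:finrank-iso} apply, so $\F_{W(A)}\cong\F_{W(B)}$ as right $R$-modules. Hence $A\cong B$.

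There is no genuine obstacle here, since all the work was done in the preceding lemmas; in particular the hard step, that a subspace containing the kernel of a finite-rank operator is good, is Proposition~\ref{prop:good}. The only point to check is that the hypotheses match. In Lemma~\ref{lem:iso-R}, the ideal must equal the full set $\{r\in R:\im r\subseteq W\}$ for the \emph{same} good subspace $W$. This holds here because Lemma~\ref{lem:infinite}(a) describes $A$ via $W(A)$, and Proposition~\ref{prop:good} shows that this same $W(A)$ is good.
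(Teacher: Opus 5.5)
Your proposal is correct and follows essentially the same approach as the paper. Part (a) chains Lemma~\ref{lem:infinite}, Proposition~\ref{prop:good} and Lemma~\ref{lem:iso-R} for the same subspace $W(A)$, and part (b) combines Lemma~\ref{lem:WA}(c) with Lemma~\ref{lem:finrank-iso}, exactly as in the paper.
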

\begin{proof}
(a) By Lemma~\ref{lem:infinite}, $A=\{r\in R:\im r\subseteq W(A)\}$ and $W(A)$ contains the
kernel of some $g\in\F$. By Proposition~\ref{prop:good}, $W(A)$ is good, and
Lemma~\ref{lem:iso-R} gives $A\cong R_R$. \\
(b) This follows from Lemma~\ref{lem:WA}(c) and Lemma~\ref{lem:finrank-iso}.
\end{proof}

\section{The left ideals of \texorpdfstring{$R$}{R}}\label{sec:rightideals}
Left ideals  of $R$ are described in this section. 
 For a left ideal $L$ of $R$ put
\[
U(L)=\{\,\psi\in V^{*}:\ \theta_{w,\psi}\in L\ \text{for every}\ w\in V\,\}.
\]
This is a subspace of $V^{*}$, and $L\subseteq L'$ implies $U(L)\subseteq U(L')$.

\begin{lemma}\label{iso:lem:UL}
Let $L$ be a left ideal of $R$.
\begin{enumerate}
\renewcommand{\labelenumi}{\textup{(\alph{enumi})}}
\item If $a\in L$ and $\varphi\in V^{*}$, then $\varphi\circ a\in U(L)$.
\item If $f\in\mathcal{F}$ and $\varphi\circ f\in U(L)$ for every $\varphi\in V^{*}$, then
$f\in L$.
\end{enumerate}
\end{lemma}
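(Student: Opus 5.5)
This is the mirror image of Lemma~\ref{lem:WA}(a),(b), with the identities \eqref{eq:theta} used on the other side and \eqref{eq:decomp} read in terms of the functionals.

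For (a), let $a\in L$ and $\varphi\in\Vd$. For every $w\in V$, the second identity in \eqref{eq:theta} gives $\tht{w}{\varphi}\,a=\tht{w}{\varphi\circ a}$. Since $\tht{w}{\varphi}\in\F\subseteq R$ and $L$ is a left ideal, this operator lies in $Ra\subseteq L$. Hence $\tht{w}{\varphi\circ a}\in L$ for every $w\in V$, which says exactly that $\varphi\circ a\in U(L)$.

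For (b), let $f\in\F$ with $\varphi\circ f\in U(L)$ for every $\varphi\in\Vd$. Write $f=\sum_{i=1}^n\tht{w_i}{\chi_i}$ as in \eqref{eq:decomp}, where $w_1,\dots,w_n$ is a basis of $\im f$. It suffices to show that each $\chi_i\in U(L)$, since then every summand $\tht{w_i}{\chi_i}$ lies in $L$ by the definition of $U(L)$, and so does $f$.

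To isolate $\chi_i$, extend $w_1,\dots,w_n$ to a basis of $V$ and let $\varphi_i\in\Vd$ be the $i$-th coordinate functional, so that $\varphi_i(w_j)=\delta_{ij}$. Then for every $x\in V$,
\[
\varphi_i(f(x))=\sum_j\chi_j(x)\varphi_i(w_j)=\chi_i(x),
\]
so $\varphi_i\circ f=\chi_i$. The hypothesis then gives $\chi_i\in U(L)$, which completes the argument.

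The only step needing care is producing the functionals $\varphi_i$, which requires extending a finite linearly independent set to a basis of $V$.
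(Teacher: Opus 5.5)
Your proposal is correct and follows the paper's proof. In (a) you use the second identity of \eqref{eq:theta} exactly as the paper does. In (b) you produce the coordinate functionals $\varphi_i$ by extending $w_1,\dots,w_n$ to a basis, show $\varphi_i\circ f=\chi_i$, and conclude $f=\sum_i\tht{w_i}{\varphi_i\circ f}\in L$, which is the same decomposition argument the paper uses.
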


\begin{proof}
(a) $\theta_{w,\varphi\circ a}=\theta_{w,\varphi}\,a\in L$ by~\eqref{eq:theta}. \\

(b) If $f=0$ then $f\in L$, so assume $f\neq0$. Since $f\in\mathcal{F}$, the subspace
$\operatorname{im} f$ is finite-dimensional; let $w_1,\dots,w_n$ be a $K$-basis of it.

\emph{Step 1: coordinate functionals.} Choose a $K$-subspace $C$ of $V$ with
$V=\operatorname{im} f\oplus C$ (extend $w_1,\dots,w_n$ to a basis of $V$). For
$i=1,\dots,n$ define $\varphi_i\in V^{*}$ by
\[
\varphi_i\Bigl(\sum_{j=1}^n c_jw_j+c\Bigr)=c_i\qquad(c_1,\dots,c_n\in K,\ c\in C).
\]
This is well defined and $K$-linear because the sum $\operatorname{im} f\oplus C$ is direct and
$w_1,\dots,w_n$ are linearly independent; and $\varphi_i(w_j)=\delta_{ij}$.

\emph{Step 2: decomposition of $f$.} Let $x\in V$. Since $f(x)\in\operatorname{im} f$, we can
write $f(x)=\sum_{j=1}^n c_jw_j$ with $c_j\in K$. Applying $\varphi_i$ gives
$\varphi_i(f(x))=\sum_jc_j\delta_{ij}=c_i$. Hence
\[
f(x)=\sum_{i=1}^n\varphi_i(f(x))\,w_i=\sum_{i=1}^n(\varphi_i\circ f)(x)\,w_i
=\sum_{i=1}^n\theta_{w_i,\varphi_i\circ f}(x).
\]
As this holds for every $x\in V$, we obtain the equality of operators
\[
f=\sum_{i=1}^n\theta_{w_i,\psi_i},\qquad\text{where }\psi_i:=\varphi_i\circ f\in V^{*};
\]
this is~\eqref{eq:decomp}.

\emph{Step 3: each summand lies in $L$.} Fix $i$. Applying the hypothesis with
$\varphi=\varphi_i$ gives $\psi_i=\varphi_i\circ f\in U(L)$. By the definition of $U(L)$,
$\theta_{w,\psi_i}\in L$ for \emph{every} $w\in V$; in particular, taking $w=w_i$,
$\theta_{w_i,\psi_i}\in L$.

\emph{Step 4: conclusion.} $L$ is closed under addition, so by Steps 2 and 3,
$f=\sum_{i=1}^n\theta_{w_i,\psi_i}\in L$.

\end{proof}

\begin{lemma}\label{iso:lem:finrank-left}
Let $L\subseteq\mathcal{F}$ be a left ideal of $R$ and let $(\psi_j)_{j\in J}$ be a $K$-basis of
$U(L)$. Then
\[
\Sigma\colon V^{(J)}\to L,\qquad (v_j)_{j\in J}\mapsto\sum_{j\in J}\theta_{v_j,\psi_j},
\]
is an isomorphism of left $R$-modules. Consequently, if $L,L'\subseteq\mathcal{F}$ are left ideals
with $\dim_KU(L)=\dim_KU(L')$, then $L\cong L'$.
\end{lemma}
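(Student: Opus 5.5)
We check that $\Sigma$ is well defined, left $R$-linear, injective and surjective, and then deduce the final assertion.

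\emph{Well defined and linear.} Here $V^{(J)}$ is the direct sum, so $(v_j)$ has finite support and the sum is finite. Each $\theta_{v_j,\psi_j}$ lies in $L$ by the definition of $U(L)$, because $\psi_j\in U(L)$. Hence $\Sigma$ maps into $L$. For $r\in R$, \eqref{eq:theta} gives $r\,\theta_{v_j,\psi_j}=\theta_{r(v_j),\psi_j}$, so $\Sigma(r\cdot(v_j))=r\,\Sigma((v_j))$, where $R$ acts on $V^{(J)}$ coordinatewise through evaluation. Additivity follows from the bilinearity of $\theta$.

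\emph{Injectivity.} Suppose $\sum_j\theta_{v_j,\psi_j}=0$ with only finitely many $v_j\neq 0$, say $j\in J_0$. Choose a basis $w_1,\dots,w_m$ of the span of $\{v_j:j\in J_0\}$ and write $v_j=\sum_k c_{jk}w_k$. Then
\[
0=\sum_k\theta_{w_k,\sum_j c_{jk}\psi_j},
\]
and evaluating at any $x\in V$ gives $\sum_k\bigl(\sum_j c_{jk}\psi_j(x)\bigr)w_k=0$. By the linear independence of the $w_k$, we get $\sum_j c_{jk}\psi_j=0$ for each $k$. Since the $\psi_j$ are linearly independent, every $c_{jk}=0$, so all $v_j=0$.

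\emph{Surjectivity.} This is the main step. Take $f\in L\subseteq\F$ and write $f=\sum_{i=1}^n\theta_{w_i,\varphi_i\circ f}$ as in Step 2 of the proof of Lemma~\ref{iso:lem:UL}(b). By Lemma~\ref{iso:lem:UL}(a), each $\varphi_i\circ f$ lies in $U(L)$, so $\varphi_i\circ f=\sum_j d_{ij}\psi_j$ with finitely many nonzero $d_{ij}$. Bilinearity then gives
\[
f=\sum_j\theta_{\sum_i d_{ij}w_i,\;\psi_j}=\Sigma\Bigl(\bigl(\textstyle\sum_i d_{ij}w_i\bigr)_j\Bigr).
\]

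\emph{Consequence.} If $\dim_K U(L)=\dim_K U(L')$, choose bases of $U(L)$ and $U(L')$ indexed by the same set $J$. Then $L\cong V^{(J)}\cong L'$ as left $R$-modules.
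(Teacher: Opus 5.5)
Your proof is correct and follows the same plan as the paper: you check that $\Sigma$ is a well-defined left $R$-linear map into $L$, get injectivity from the linear independence of the $\psi_j$, get surjectivity from the decomposition $f=\sum_i\theta_{w_i,\varphi_i\circ f}$ together with Lemma~\ref{iso:lem:UL}(a), and reindex bases for the consequence. The differences are cosmetic. For injectivity you expand the $v_j$ in a basis, where the paper applies a functional $\chi$ with $\chi(v_{j_0})=1$. For surjectivity you expand each $\varphi_i\circ f$ directly in the basis $(\psi_j)$ and regroup by bilinearity. The paper instead passes through $L'=\Sigma(V^{(J)})$ and Lemma~\ref{iso:lem:UL}(b), which is the same computation in unwound form.
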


\begin{proof}
Throughout, an element $(v_j)_{j\in J}$ of $V^{(J)}$ has only finitely many nonzero entries, so the
sum defining $\Sigma$ is a finite sum of operators. For $j\in J$ and $w\in V$ write
$\iota_j(w)\in V^{(J)}$ for the family with $w$ in position $j$ and $0$ elsewhere; then
$\Sigma(\iota_j(w))=\theta_{w,\psi_j}$.

\emph{Step 1: $\Sigma$ is a well-defined homomorphism of left $R$-modules into $L$.} Each $\psi_j$
lies in $U(L)$, so by the definition of $U(L)$ every $\theta_{v_j,\psi_j}$ lies in $L$. A finite
sum of elements of $L$ is in $L$, so $\Sigma((v_j))\in L$. Additivity follows from the
bilinearity of $(w,\psi)\mapsto\theta_{w,\psi}$. For $r\in R$, using~\eqref{eq:theta},
\[
r\,\Sigma((v_j))=\sum_j r\,\theta_{v_j,\psi_j}=\sum_j\theta_{r(v_j),\psi_j}=\Sigma((r\cdot v_j)).
\]

\emph{Step 2: a key identity.} Let $\chi\in V^{*}$, $v\in V$ and $\psi\in V^{*}$. Then the
functional $\chi\circ\theta_{v,\psi}\in V^{*}$ equals $\chi(v)\,\psi$. Indeed, for every $x\in V$,
\[
(\chi\circ\theta_{v,\psi})(x)=\chi\bigl(\psi(x)\,v\bigr)=\psi(x)\,\chi(v)
=\bigl(\chi(v)\,\psi\bigr)(x),
\]
using that $\chi$ is $K$-linear and $\psi(x)\in K$.

\emph{Step 3: $\Sigma$ is injective.} Let $(v_j)\in V^{(J)}$ with $\Sigma((v_j))=0$, that is,
$\sum_j\theta_{v_j,\psi_j}=0$ as an operator $V\to V$. Suppose, for a contradiction, that
$v_{j_0}\neq0$ for some $j_0\in J$. Choose $\chi\in V^{*}$ with $1 = \chi(v_{j_0})\neq0$. Such $\chi$
exists: extend $\{v_{j_0}\}$ to a $K$-basis of $V$ and let $\chi$ be the coordinate functional
of $v_{j_0}$, so $\chi(v_{j_0})=1$. Now compose the zero operator $\sum_j\theta_{v_j,\psi_j}$
on the left with $\chi$; the result is the zero functional. Since composition with $\chi$ is
additive and the sum is finite, Step~2 gives
\[
0=\chi\circ\Bigl(\sum_{j}\theta_{v_j,\psi_j}\Bigr)=\sum_{j}\chi\circ\theta_{v_j,\psi_j}
=\sum_{j}\chi(v_j)\,\psi_j\qquad\text{in }V^{*}.
\]
This is a finite linear relation among the $\psi_j$ with scalar coefficients $\chi(v_j)\in K$
(only finitely many are nonzero, because only finitely many $v_j$ are nonzero). Since
$(\psi_j)_{j\in J}$ is a basis of $U(L)$, the $\psi_j$ are linearly independent in $V^{*}$, so
every coefficient vanishes: $\chi(v_j)=0$ for all $j$. In particular $\chi(v_{j_0})=0$,
contradicting the choice of $\chi$. Hence $v_j=0$ for all $j$, and $\Sigma$ is injective.

\emph{Step 4: $\Sigma$ is surjective.} Let $L'=\Sigma(V^{(J)})$. By Step~1, $L'$ is a left
$R$-submodule of $L$, i.e.\ a left ideal of $R$ with $L'\subseteq L$, and therefore
$U(L')\subseteq U(L)$. Conversely, for each $j$ and every $w\in V$ we have
$\theta_{w,\psi_j}=\Sigma(\iota_j(w))\in L'$, so $\psi_j\in U(L')$. Since $U(L')$ is a subspace
containing the basis $(\psi_j)$ of $U(L)$, we get $U(L)\subseteq U(L')$, hence $U(L')=U(L)$.
Now let $f\in L$. Then $f\in\mathcal{F}$ because $L\subseteq\mathcal{F}$, and for every
$\varphi\in V^{*}$, Lemma~\ref{iso:lem:UL}(a) applied to $L$ gives
$\varphi\circ f\in U(L)=U(L')$. Lemma~\ref{iso:lem:UL}(b), applied to the left ideal $L'$, gives
$f\in L'$. Thus $L=L'$, and $\Sigma$ is surjective.

\emph{Step 5: the last assertion.} Let $(\psi_j)_{j\in J}$ and $(\psi'_k)_{k\in J'}$ be bases of
$U(L)$ and $U(L')$. By the first part, $L\cong V^{(J)}$ and $L'\cong V^{(J')}$ as left
$R$-modules. Since $|J|=\dim_KU(L)=\dim_KU(L')=|J'|$, there is a bijection $\sigma\colon J\to J'$,
and $(v_j)_{j\in J}\mapsto(v_{\sigma^{-1}(k)})_{k\in J'}$ is an isomorphism
$V^{(J)}\cong V^{(J')}$ of left $R$-modules. Hence $L\cong L'$.
\end{proof}


\begin{lemma}\label{iso:lem:kerdim}
Let $L$ be a left ideal of $R$ with $L\not\subseteq\mathcal{F}$.
\begin{enumerate}
\renewcommand{\labelenumi}{\textup{(\alph{enumi})}}
\item $L$ contains an element with finite-dimensional kernel.
\item If $x\in L$ and $l\in L$ satisfy $l(v)\neq0$ for some $v\in\ker x$, then there is
$x'\in L$ with $\ker x'\subsetneq\ker x$.
\item There is $x_0\in L$ with $\dim_K\ker x_0<\infty$ such that every element of $L$ vanishes on
$\ker x_0$.
\end{enumerate}
\end{lemma}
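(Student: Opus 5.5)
For (a), I will take $a\in L\setminus\F$. Lemma~\ref{lem:inverse} supplies $s\in R$ with $sa-1=:g\in\F$, and since $L$ is a left ideal, $sa\in L$. If $(sa)(p)=0$, then $p=-g(p)\in\im g$. Hence $\ker(sa)\subseteq\im g$, which is finite-dimensional, so $x=sa$ is the required element.

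For (b), let $x,l\in L$ and $v\in\ker x$ with $l(v)\neq0$. I will look for $x'=x+tl$ with $t\in\F\subseteq R$; then $x'\in L$, and $x'$ agrees with $x$ on $\ker l$. The natural first try is to choose $t$ so that $t\,l$ is injective on $\ker x$, for instance $t=\tht{w}{\chi}$ with $\chi(l(v))=1$. But $t\,l$ then has rank one, and on a finite-dimensional $\ker x$ it may not be enough. Instead I will aim only for $\ker x'\subsetneq\ker x$, and I will arrange this by forcing $x'$ to agree with $x$ modulo a complement.

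Concretely, I assume $\ker x$ is finite-dimensional, which is the case needed in (c), though the argument should work in general. I choose $\chi\in\Vd$ with $\chi(l(v))=1$ and a vector $w\notin\im x$; such a $w$ can be taken to exist after replacing $x$ suitably. Then I set $x'=x+\tht{w}{\chi}\,l$. Suppose $x'(p)=0$. Then $x(p)=-\chi(l(p))\,w$, and since $w\notin\im x$ this forces $\chi(l(p))=0$ and $x(p)=0$. So $\ker x'=\ker x\cap\ker(\chi\circ l)$, which excludes $v$. This is strict containment.

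The obstacle is finding $w\notin\im x$. If $x$ is surjective, I will first replace $x$ by $\Mu{X}x\in L$. Since $\Mu{X}$ is injective, $\ker(\Mu{X}x)=\ker x$, and $1\notin\im(\Mu{X}x)$, so $w=1$ works. This gives (b).

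For (c), I start from the $x$ of (a) with $\dim_K\ker x<\infty$. Whenever some $l\in L$ does not vanish on $\ker x$, I apply (b) to strictly decrease the kernel, which can happen only finitely many times since the dimension is finite. The process therefore terminates at some $x_0\in L$ such that every element of $L$ vanishes on $\ker x_0$, with $\dim_K\ker x_0<\infty$.
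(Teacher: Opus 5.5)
Your proof is correct and follows the paper's: (a) is identical, your fallback construction $x'=\Mu{X}x+\tht{1}{\chi}\,l$ in (b) is exactly the paper's choice, and your terminating descent in (c) is the paper's minimal-dimension argument. That construction works whether or not $x$ is surjective, so the case split is unnecessary. Your argument for (b) also never uses the assumption that $\ker x$ is finite-dimensional, so it proves (b) as stated.
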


\begin{proof}
(a) Let $a\in L\setminus\mathcal{F}$. By Lemma~\ref{lem:inverse} there is $s\in R$ with
$g:=sa-1\in\mathcal{F}$. Then $sa\in L$, and if $sa(v)=0$ then $v=-g(v)$; so
$\ker(sa)\subseteq\operatorname{im} g$ is finite-dimensional.

(b) Pick $\varphi\in V^{*}$ with $\varphi(l(v))\ne0$ and put
$x'=M_Xx+\theta_{1,\varphi}\,l\in L$. If $x'(p)=0$, then $X\,x(p)+\varphi(l(p))\cdot1=0$;
comparing constant terms gives $\varphi(l(p))=0$, hence $Xx(p)=0$ and $x(p)=0$. So
$\ker x'\subseteq\ker x$. Moreover $x'(v)=\varphi(l(v))\cdot1\neq0$, so $v\notin\ker x'$.

(c) By (a) we may choose $x_0\in L$ with finite-dimensional kernel of minimal dimension. By (b),
every $l\in L$ vanishes on $\ker x_0$.
\end{proof}

\begin{lemma}\label{iso:lem:factor}
Let $x_0\in R$ have finite-dimensional kernel. If $r\in R$ vanishes on $\ker x_0$, then
$r\in Rx_0$.
\end{lemma}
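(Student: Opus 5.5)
The plan is to reduce to the almost-inverse supplied by Lemma~\ref{lem:inverse}, and then absorb the finite-rank error term by a purely linear-algebraic factorization through $x_0$.

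\emph{Step 1: $x_0\notin\F$.} If $x_0\in\F$, then $\im x_0$ is finite-dimensional. Since $V/\ker x_0\cong\im x_0$, the kernel $\ker x_0$ would have finite codimension in the infinite-dimensional space $V$. It would then be infinite-dimensional, contrary to the hypothesis. Hence $x_0\in R\setminus\F$. By Lemma~\ref{lem:inverse} there is $s_0\in R$ with $g:=s_0x_0-1\in\F$.

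\emph{Step 2: first approximation.} Compute
\[
rs_0x_0=r(1+g)=r+rg,
\qquad\text{so}\qquad
r=rs_0x_0-rg,
\]
where $rs_0\in R$. It therefore suffices to write $rg=hx_0$ for some $h\in\F\subseteq R$; then $s:=rs_0-h\in R$ satisfies $sx_0=r$.

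\emph{Step 3: $rg$ vanishes on $\ker x_0$.} If $p\in\ker x_0$, then $g(p)=s_0x_0(p)-p=-p$. Hence $rg(p)=-r(p)=0$, because $r$ vanishes on $\ker x_0$ by hypothesis.

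\emph{Step 4: factorization of a finite-rank operator through $x_0$.} Let $k\in\F$ vanish on $\ker x_0$ (we apply this with $k=rg$). Choose a complement $C$ with $V=\im x_0\oplus C$. Define $h\colon V\to V$ by setting $h(x_0(p))=k(p)$ on $\im x_0$ and $h(C)=0$. This is well defined: if $x_0(p)=x_0(p')$, then $p-p'\in\ker x_0$, so $k(p)=k(p')$. It is clearly $K$-linear. Moreover $\im h=\im k$ is finite-dimensional, so $h\in\F\subseteq R$, and by construction $hx_0=k$.

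The only point needing any care is Step~4. There one must check that $h$ is well defined, and that $h$ genuinely lies in $R$; the latter holds because it has finite rank and $\F\subseteq R$. Everything else is a direct application of Lemma~\ref{lem:inverse}.
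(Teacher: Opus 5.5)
Your proof is correct and follows essentially the same route as the paper: you get $s_0$ with $s_0x_0-1\in\F$ from Lemma~\ref{lem:inverse}, write $r=rs_0x_0-rg$, and factor the finite-rank term $rg$, which vanishes on $\ker x_0$, as $hx_0$ with $h\in\F\subseteq R$. Your Step~3 also spells out why $rg$ vanishes on $\ker x_0$, a check the paper leaves implicit.
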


\begin{proof}
Since $V/\ker x_0\cong\operatorname{im} x_0$ and $\dim_KV=\infty$, $x_0\notin\mathcal{F}$. By
Lemma~\ref{lem:inverse} there is $s\in R$ with $g:=sx_0-1\in\mathcal{F}$. Then
$f:=rg\in\mathcal{F}$ and $f$ vanishes on $\ker x_0$. Hence there is a well defined linear map
$\operatorname{im} x_0\to\operatorname{im} f$, $x_0(p)\mapsto f(p)$; extend it to some
$h\in\operatorname{End}_K(V)$ with $\operatorname{im} h\subseteq\operatorname{im} f$. Then
$h\in\mathcal{F}\subseteq R$, $f=hx_0$, and $r=rsx_0-rg=(rs-h)x_0\in Rx_0$.
\end{proof}

\begin{lemma}\label{iso:lem:kernels}
For every finite-dimensional subspace $W$ of $V$ there are $x,y\in R$ with $xy=1$ and
$\ker x=W$.
\end{lemma}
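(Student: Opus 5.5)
The idea is to transport the kernel of a power of the backward shift onto $W$ by an invertible operator that differs from the identity by a finite-rank operator. Let $n=\dim_K W$ and put $P_n=K1+KX+\dots+KX^{n-1}$. By the definition of $\Dsh$, $\ker\Dsh^n=P_n$. By Lemma~\ref{lem:shift}(b), $\Dsh^n\Mu{X^n}=1$. So the pair $x=\Dsh^n$, $y=\Mu{X^n}$ already settles the case $W=P_n$.

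For a general $W$, I will build $\sigma\in\E$ with three properties: $\sigma$ is bijective, $\sigma(W)=P_n$, and both $\sigma-1$ and $\sigma^{-1}-1$ lie in $\F$.

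\begin{enumerate}[label=\textup{(\roman*)}]
\item Let $Z=W+P_n$, which is finite-dimensional, and choose a complement $C$ of $Z$ in $V$.
\item Since $\dim_K W=\dim_K P_n=n$, the complements of $W$ and of $P_n$ inside $Z$ have the same dimension. Extending a basis of $W$ and a basis of $P_n$ to bases of $Z$, I get a linear automorphism $\sigma_0$ of $Z$ with $\sigma_0(W)=P_n$.
\item Define $\sigma$ to be $\sigma_0$ on $Z$ and the identity on $C$.
\end{enumerate}

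Then $\sigma$ is bijective. Its inverse is $\sigma_0^{-1}$ on $Z$ and the identity on $C$. Both $\sigma-1$ and $\sigma^{-1}-1$ vanish on $C$, so their images lie in $Z$ and they have finite rank. Hence $\sigma-\Dsh^0\Mu{1}\in\F$ and $\sigma^{-1}-\Dsh^0\Mu{1}\in\F$, so $\sigma,\sigma^{-1}\in R$.

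Now set
\[
x=\Dsh^n\sigma,\qquad y=\sigma^{-1}\Mu{X^n}.
\]
Both lie in $R$ by Proposition~\ref{prop:subring}. Using Lemma~\ref{lem:shift}(b),
\[
xy=\Dsh^n\sigma\sigma^{-1}\Mu{X^n}=\Dsh^n\Mu{X^n}=1.
\]
For the kernel, $\ker x=\sigma^{-1}(\ker\Dsh^n)=\sigma^{-1}(P_n)=W$, because $\sigma$ is bijective with $\sigma(W)=P_n$.

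There is no real obstacle. The only point needing care is that $\sigma$ and $\sigma^{-1}$ belong to $R$, which is why $\sigma$ is chosen to be the identity off the finite-dimensional subspace $Z$. The case $W=0$ is covered by $n=0$, where $x=y=1$.
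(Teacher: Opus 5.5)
Your proof is correct, but it takes a different route from the paper's. The paper first treats a one-dimensional $W=Kc$ with an explicit rank-one perturbation $g=1+\tht{a}{\psi}$, where $a=1-c$, $\psi(c)=1$ and $\psi(1)\neq0$. It computes the inverse $g^{-1}=1-\psi(1)^{-1}\tht{a}{\psi}$ by hand, checks that $g(c)=1$, and takes $x=\Dsh g$, $y=g^{-1}\Mu{X}$, so that $\ker x=g^{-1}(K\cdot 1)=Kc$. A general $W=W_1+Kc$ is then handled by induction on $\dim_K W$, composing $x=x_2x_1$ and $y=y_1y_2$ with $\ker x_2=Kx_1(c)$.

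You instead work in one step. You use $\Dsh^n$, whose kernel is $P_n$ and which has the right inverse $\Mu{X^n}$. You then move $P_n$ onto $W$ with a single automorphism $\sigma$ that is the identity off the finite-dimensional space $W+P_n$, so that $\sigma-1,\sigma^{-1}-1\in\F$ and hence $\sigma,\sigma^{-1}\in R$.

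Your argument isolates the underlying reason the lemma holds. The units of $R$ of the form $1+f$ with $f\in\F$ act transitively on the $n$-dimensional subspaces of $V$, and $\Dsh^n$ is a one-sided invertible element of $R$ with $n$-dimensional kernel. It also avoids both the induction and the inverse computation. The paper's argument instead gives completely explicit formulas built from rank-one operators, at the cost of an induction on $\dim_K W$.
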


\begin{proof}
First let $W=Kc$ with $c\ne0 \in V $. There is $\psi\in V^{*}$ with $\psi(c)=1$ and $\psi(1)\ne0$. 
 Put
$\mu=\psi(1)$ and $a=1-c$, so $\psi(a)=\mu-1$, and
\[
g=1+\theta_{a,\psi},\qquad g'=1-\mu^{-1}\theta_{a,\psi},\qquad x=Dg,\qquad y=g'M_X .
\]
These lie in $R$. A direct computation gives
$gg'(v)=v+\psi(v)\bigl(1-\mu^{-1}-\mu^{-1}(\mu-1)\bigr)a=v$ and
$g'g(v)=v+\psi(v)\bigl(1-\mu^{-1}-\mu^{-1}(\mu-1)\bigr)a=v$, so $g'=g^{-1}$, and $g(c)=c+a=1$.
Hence $xy=Dgg'M_X=DM_X=1$, and
$\ker x=g^{-1}(\ker D)=g^{-1}(K\cdot1)=Kc$.

In general we induct on $\dim_KW$, the case $W=0$ being trivial ($x=y=1$). Write
$W=W_1+Kc$ with $W_1=\ker x_1$, $x_1y_1=1$, $x_1,y_1\in R$. By the first part there are
$x_2,y_2\in R$ with $x_2y_2=1$ and $\ker x_2=Kx_1(c)$ (if $x_1(c)=0$ take $x_2=y_2=1$). Then
$x=x_2x_1$ and $y=y_1y_2$ satisfy $xy=x_2x_1y_1y_2=1$ and
$\ker x=x_1^{-1}(Kx_1(c))=Kc+\ker x_1=W$.
\end{proof}

\begin{theorem}\label{iso:thm:left}
Let $L$ be a left ideal of $R$.
\begin{enumerate}
\renewcommand{\labelenumi}{\textup{(\alph{enumi})}}
\item If $L\not\subseteq\mathcal{F}$, then $L=Rx_0=\{r\in R:\ r(\ker x_0)=0\}$ for some
$x_0\in L$ with finite-dimensional kernel, and $L\cong{}_RR$.
\item If $L,L'\subseteq\mathcal{F}$ are left ideals with $\dim_KU(L)=\dim_KU(L')$, then
$L\cong L'$.
\end{enumerate}
\end{theorem}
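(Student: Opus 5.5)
Part (b) is exactly the last assertion of Lemma~\ref{iso:lem:finrank-left}, so the work is in part (a). I would argue in three steps.

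\emph{Step 1: describe $L$.} Since $L\not\subseteq\mathcal{F}$, Lemma~\ref{iso:lem:kerdim}(c) gives $x_0\in L$ with $\dim_K\ker x_0<\infty$ such that every element of $L$ vanishes on $\ker x_0$. Hence $Rx_0\subseteq L\subseteq\{r\in R: r(\ker x_0)=0\}$. Conversely, Lemma~\ref{iso:lem:factor} says that any $r\in R$ vanishing on $\ker x_0$ lies in $Rx_0$. So the three sets coincide, which gives the set-theoretic part of (a).

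\emph{Step 2: the plan for $L\cong{}_RR$.} Apply Lemma~\ref{iso:lem:kernels} to the finite-dimensional subspace $W=\ker x_0$. This gives $x,y\in R$ with $xy=1$ and $\ker x=\ker x_0$. By Step~1 applied to $x$ (Lemma~\ref{iso:lem:factor} only needs $\ker x$ finite-dimensional), $Rx=\{r\in R: r(\ker x)=0\}=L$. Now define the left $R$-module map $\rho\colon R\to Rx$ by $\rho(r)=rx$.
\begin{itemize}
\item It is surjective by definition.
\item It is injective because $rx=0$ implies $r=rxy=0$.
\end{itemize}
Thus $L=Rx\cong{}_RR$.

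\emph{Step 3: checking Lemma~\ref{iso:lem:factor} for $x$.} I need to confirm that Lemma~\ref{iso:lem:factor} really applies to $x$, i.e.\ that $x\in R$ has finite-dimensional kernel; both hold by construction. The only delicate point is that Lemma~\ref{iso:lem:factor} uses Lemma~\ref{lem:inverse}, which needs $x\notin\mathcal{F}$. This is automatic, as shown there, since $V/\ker x\cong\im x$ would otherwise force $\dim_K V<\infty$.

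I expect no real obstacle, since the heavy lifting is in Lemmas~\ref{iso:lem:kerdim}, \ref{iso:lem:factor} and~\ref{iso:lem:kernels}. The step needing most care is the identification $Rx=Rx_0$ through the common description by vanishing on the same kernel, rather than trying to relate $x$ and $x_0$ directly.
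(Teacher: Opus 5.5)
Your proof is correct and is essentially the paper's: Step~1 is identical, and the isomorphism $\rho(r)=rx$, built from the pair $x,y$ of Lemma~\ref{iso:lem:kernels} with $xy=1$, is the same. The only difference is that you get surjectivity by applying Lemma~\ref{iso:lem:factor} a second time (to $x$) to conclude $L=Rx$, whereas the paper checks it directly: for $l\in L$ and $v\in V$ one has $v-yx(v)\in\ker x=\ker x_0$, so $l=(ly)x$, which avoids invoking that lemma again.
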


\begin{proof}
(a) Let $x_0$ be as in Lemma~\ref{iso:lem:kerdim}(c) and $W=\ker x_0$. Then
$Rx_0\subseteq L\subseteq\{r\in R: r(W)=0\}\subseteq Rx_0$, the last inclusion by
Lemma~\ref{iso:lem:factor}. By Lemma~\ref{iso:lem:kernels} there are $x,y\in R$ with $xy=1$ and
$\ker x=W$. The map $\rho\colon R\to L$, $\rho(r)=rx$, is a homomorphism of left $R$-modules
with values in $L$, since $rx$ vanishes on $W$. It is injective, since $rx=0$ implies
$r=rxy=0$. It is surjective: if $l\in L$ and $v\in V$, then $v-yx(v)\in\ker x=W$, so
$l(v)=l(yx(v))$; thus $l=(ly)x=\rho(ly)$. \\

(b) is Lemma~\ref{iso:lem:finrank-left}.
\end{proof}

\section{Proof of the main theorem}\label{sec:main}
\begin{theorem}\label{thm:isoartinian}
The ring $R$ is right and left  isoartinian.
\end{theorem}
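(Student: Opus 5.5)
We prove the right-hand statement first, using Theorem~\ref{thm:classification}. Let $A_1\supseteq A_2\supseteq\cdots$ be a descending chain of right ideals of $R$. We split into two cases according to whether $\F$ eventually contains the chain.

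\emph{Case 1: no $A_i$ lies in $\F$.} Then Theorem~\ref{thm:classification}(a) gives $A_i\cong R_R$ for every $i$, so the chain is constant up to isomorphism from the start.

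\emph{Case 2: some $A_m\subseteq\F$.} Then $A_i\subseteq\F$ for all $i\ge m$, and by Theorem~\ref{thm:classification}(b) we have $A_i=\F_{W(A_i)}$. The map $A\mapsto W(A)$ is monotone, so $W(A_m)\supseteq W(A_{m+1})\supseteq\cdots$ is a descending chain of subspaces of $V$. Their dimensions therefore form a non-increasing sequence of cardinals. Cardinals are well-ordered, so this sequence is eventually constant, say from index $n$ on. Theorem~\ref{thm:classification}(b) then gives $A_i\cong A_n$ for all $i\ge n$. Applying the same argument to a descending chain of submodules of an arbitrary right ideal shows that $R_R$ is isoartinian.

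The left-hand statement follows the same pattern, using Theorem~\ref{iso:thm:left}. For a descending chain $L_1\supseteq L_2\supseteq\cdots$ of left ideals, we again split into two cases.

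\emph{Case 1: no $L_i$ lies in $\F$.} Then Theorem~\ref{iso:thm:left}(a) gives $L_i\cong{}_RR$ for every $i$.

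\emph{Case 2: some $L_m\subseteq\F$.} Then $L_i\subseteq\F$ for all $i\ge m$. The subspaces $U(L_i)\subseteq\Vd$ form a descending chain, because $L\subseteq L'$ implies $U(L)\subseteq U(L')$. Their dimensions are a non-increasing sequence of cardinals, hence eventually constant. Theorem~\ref{iso:thm:left}(b) then yields $L_i\cong L_n$ for all large $i$.

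The only point needing care is the cardinal argument. A non-increasing sequence $\kappa_1\ge\kappa_2\ge\cdots$ of cardinals cannot strictly decrease infinitely often, since that would produce an infinite strictly descending chain in a well-ordered class. So it stabilizes. Everything else has been settled by the classification theorems, so I expect no real obstacle. The theorem follows once the dimension invariants are observed to be monotone in the right ideal, respectively left ideal.
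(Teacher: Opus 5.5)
Your proof is correct and is essentially the paper's argument: you split according to whether the chain eventually lies in $\F$, use Theorem~\ref{thm:classification}(a) (resp.\ Theorem~\ref{iso:thm:left}(a)) in the first case, and in the second use monotonicity of $W(\cdot)$ (resp.\ $U(\cdot)$), well-ordering of cardinals, and part (b) of the classification. Your sentence about chains of submodules of an arbitrary right ideal is redundant, since submodules of $R_R$ are just right ideals and your chain $A_1\supseteq A_2\supseteq\cdots$ was already arbitrary.
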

\begin{proof}
Let $A_1\supseteq A_2\supseteq\cdots$ be a descending chain of right ideals of $R$. \\
\emph{Case 1: $A_n\not\subseteq\F$ for every $n$.} Then $A_n\cong R_R$ for all $n$ by
Theorem~\ref{thm:classification}(a), and so $A_i\cong A_1$ for all $i\geq 1$. \\
\emph{Case 2: $A_{n_0}\subseteq\F$ for some $n_0$.} Then $A_i\subseteq\F$ for all $i\ge n_0$.
The cardinal numbers $\kappa_i=\dim_K W(A_i)$, $i\ge n_0$, form a non-increasing sequence,
because $W(A_{i+1})\subseteq W(A_i)$. Since the cardinal numbers are well ordered, there is
$m\ge n_0$ with $\kappa_m\le\kappa_i$ for all $i\ge n_0$. For $i\ge m$ we then have
$\kappa_m\le\kappa_i\le\kappa_m$, so $\kappa_i=\kappa_m$, and
Theorem~\ref{thm:classification}(b) yields $A_i\cong A_m$.
In both cases the chain is eventually constant up to isomorphism.

The same argument, with Theorem~\ref{iso:thm:left} in place of Theorem~\ref{thm:classification} and
the cardinals $\dim_KU(L_i)$ in place of $\dim_KW(A_i)$, applies to a descending chain
$L_1\supseteq L_2\supseteq\cdots$ of left ideals.
\end{proof}
\begin{proof}[Proof of Theorem~\ref{thm:main}]
Statement (1) is Proposition~\ref{prop:prime}, (2) is Theorem~\ref{thm:isoartinian}, and (3)
is Proposition~\ref{prop:notnoeth}.
\end{proof}
\begin{remark}\label{rem:goldie}
By \cite[Theorem~4.6]{FN}, a semiprime right isoartinian ring that is not right noetherian must
have infinite Goldie dimension (for Goldie dimension see, e.g., \cite[Section~2.2]{MR}). This
can be seen directly in our example. For $0\neq w\in V$ the right ideal
$\F_{Kw}=\{\tht{w}{\psi}:\psi\in\Vd\}$ is a minimal right ideal: if $\psi\neq0$, pick $x$ with
$\psi(x)=1$; then $\tht{w}{\psi}\,\tht{x}{\chi}=\tht{w}{\chi}$ for every $\chi\in\Vd$
by~\eqref{eq:theta}. By Theorem~\ref{thm:classification}(b), all the right ideals $\F_{Kw}$ are
isomorphic. If $w_1,w_2,\dots$ are linearly independent (for instance $w_n=X^n$), the sum
$\sum_n\F_{Kw_n}$ is direct, because an element of $\F_{Kw_n}\cap\sum_{j\ne n}\F_{Kw_j}$ has
image in $Kw_n\cap\sum_{j\neq n}Kw_j=0$. Moreover, by~\eqref{eq:decomp}, $\F$ is the sum of
the minimal right ideals $\F_{Kw}$, and every minimal right ideal $S$ of $R$ is contained in
$\F$, since otherwise $S\cong R_R$ by Theorem~\ref{thm:classification}(a), whereas $R_R$ is
not simple ($\F$ is a nonzero proper right ideal). Hence $\F$ is the right socle of $R$, and
it is an infinite direct sum of pairwise isomorphic simple right ideals.
\end{remark}

\subsection*{Acknowledgments}
This research was funded in whole or in part by the Austrian Science Fund (FWF) [Grant DOI 10.55776/P36742].

\section*{Declaration of AI assistance}

During the preparation of this paper, the authors used ChatGPT to refine the language, improve the presentation, and assist with the preparation of \LaTeX{} text. This work was carried out through face-to-face discussions at the University of Graz. The authors take full responsibility for the mathematical correctness and final content of the paper.


\end{document}